\newtheorem{thm}{Theorem}[section]
\newtheorem{lemma}[thm]{Lemma}
\newtheorem{remark}[thm]{Remark}
\newtheorem{proposition}[thm]{Proposition}
\newtheorem{example}[thm]{Example}
\providecommand{\floor}[1]{\left\lfloor \, #1 \, \right\rfloor}
\newcommand{\bn}{B_n^c}
\newcommand{\mcl}{\mathcal L}
\newcommand{\eps}{\epsilon}
\title{Extreme Value Theory with Spectral Techniques: application to a simple attractor.}
\date{\today}
\begin{document}
\author{Jason Atnip}
\thanks{J Atnip, School of Mathematics and Statistics, University of New South Wales, Sydney, NSW
2052, Australia.
E-mail: {\tt{j.atnip@unsw.edu.au}}}
\author{Nicolai Haydn}
\thanks{N Haydn, Mathematics Department, USC,
Los Angeles, 90089-2532. E-mail: {\tt {nhaydn@usc.edu}}}
 \author{Sandro Vaienti}
 \thanks{S Vaienti, Aix Marseille Universit\'e, Universit\'e de Toulon, CNRS, CPT, UMR 7332, Marseille, France.
 E-mail: {\tt {vaienti@cpt.univ-mrs.fr}}}

\date{\today}
\begin{abstract}
We give a  brief account of application of extreme value theory in dynamical systems by using perturbation techniques associated to   the transfer operator. We will apply it to the baker's map and we will get a precise formula for the extremal index. We will also show that the statistics of the number of visits in small sets is compound Poisson distributed.
\end{abstract}


\maketitle


\tableofcontents

\section{Introduction}
Extreme value theory (EVT) has been widely studied in the last years in application to dynamical systems both deterministic and random. A review of the recent results with an exhaustive bibliography is given in our collective work \cite{book}. As we will see, there is a close connection between EVT and the statistics of recurrence and both could be worked out simultaneously by using perturbations theories of the transfer operator. This powerful approach is limited to systems with quasi-compact transfer operators and exponential decay of correlations; nevertheless it can be applied to situations where more standard techniques meet obstructions and difficulties, in particular to:\\
- non-stationary  and random dynamical systems,\\
- observable with non-trivial extremal sets,\\
- higher-dimensional systems.\\
Another big advantage of this technique is the possibility of defining in a precise and universal way the extremal index (EI). We defer to our recent paper \cite{v2} for a critical discussion of this issue with several explicit computations of the EI in new situations. The germ of the perturbative technique of the transfer operator applied to EVT is in the fundamental paper \cite{KL} by G. Keller and C. Liverani; the explicit connection with recurrence and extreme value theory has been done by G. Keller in the article \cite{GK}, which contains also a list of suggestions for further investigations. We successively applied this method to i.i.d.\ random transformations in \cite{v5, v2}, to randomly quenched dynamical systems in \cite{quenched},  to coupled maps on finite lattices in \cite{v3}, and  to open systems with targets and holes in  \cite{v4}.

 The object of this note is to illustrate this technique by  presenting a new application to a bi-dimensional invertible system. We will see that the perturbative technique could be applied in this case as well  provided one could find the good functional spaces where the transfer operator exhibits quasi-compactness.

 We will find a few limitations to a complete application of the theory and to its generalization to a wider class of  maps in higher dimensions, see Remarks \ref{RRR1} and \ref{RRR2}.\\

 When the first version of this paper circulated, the spectral technique discussed above did not allow us to get another property related to limiting return and hitting  times
distribution in small sets (sometimes also called {\em target} or {\em holes}), namely the  statistics of the number of visits, which takes usually the form of a compound Poisson distribution. This has been recently achieved in the paper  \cite{compound}, and it could be  easily applied to the system under investigation in this paper. We will briefly quote this technique in section \ref{PPP}. As for the EVT, such a technique suffers of the limitation imposed by the shape of the target sets and for choice of the parameters, see remark \ref{RRR2}. The former will be particularly important for us when we decide to use rectangular target set, see section \ref{fc}.  These case could be worked out with another technique  developed  by two of us, see \cite{HHVV}, which allows to recover compound Poisson distributions  for invertible maps in higher dimension and arbitrary small sets.
By using this  approach, we will be also able to construct an example for the {\em fat} baker map with a compound Poisson distribution which  is neither  the standard Poisson  nor  the P\`olya-Aeppli,which are the most common compound distributions.

We will finally discuss the extension to compound Poisson point process on the real line.

\section{A simple  example: the generalized baker's map}

\subsection{The map} We now treat an example for which there are not apparently established results for the  extreme value distributions. This example, the generalized baker's map, from now on simply abbreviated as baker's map,  is a prototype for uniformly hyperbolic transformations in more than one dimension, two in our case, and in order to study it with the transfer operator, we will introduce suitable anisotropic Banach spaces. Our original goal was to investigate directly larger classes of uniformly hyperbolic maps, including Anosov ones, but, as we said above, the generalizations do not seem straightforward; we will explain the reason later on. With the usual probabilistic approaches extreme value distributions have been obtained for the linear automorphisms of the torus in \cite{NF}. \\

We will refer to the baker's transformation  studied in Section 2.1 in \cite{DDHH}, but we will write it in a particular case in order to make the exposition more accessible. The baker's transformation $T(x_n, y_n)$ is defined on the unit square $X=[0,1]^2 \subset \mathbb{R}^2$ into itself by:
\begin{eqnarray}\label{bibi}
x_{n+1} =
\bigg \{
\begin{array}{rl}
\gamma_a x_n & \text{if} \ y_n<\alpha \\
(1-\gamma_b)+\gamma_bx_n & \text{if} \ y_n>\alpha\\
\end{array}\\
y_{n+1} =
\bigg \{
\begin{array}{rl}
\frac{1}{\alpha}y_n & \text{if} \ y_n<\alpha \\
\frac{1}{\upsilon}(y_n-\alpha) & \text{if} \ y_n>\alpha,\\
\end{array}
\end{eqnarray}
with $\upsilon=1-\alpha,$ $\gamma_a+\gamma_b\le 1,$ see Fig. 1. To simplify some of the  next formulae, we will take $\alpha=\upsilon= 0.5$ and $\gamma_a=\gamma_b< 0.5.$ This last value must be strictly less than $1/2$ since Lemma \ref{ldc} requires the stable dimension $d_s$ strictly less than one, which corresponds to a fractal invariant set ({\em thin baker's map}). This condition will be relaxed in the example \ref{IIEE} ({\em fat baker's map}), but using an approach different of the spectral one leading to Lemma \ref{ldc}.
\begin{figure}
\begin{tikzpicture}
\draw [pattern=north west lines, pattern color=green] (0,0) rectangle (6,2);
\draw [pattern=north east lines, pattern color=black] (0,2) rectangle (6,6);
\draw [pattern=north west lines, pattern color=green] (10,0) rectangle (12.5,6);
\draw [pattern=north east lines, pattern color=black] (14.5,0) rectangle (16,6);
\draw[-](0,0) --  (0,6);
\draw[-](6,0) --  (6,6);
\draw (0,0) node[below left]{$0$};
\draw (0,6) node[left]{$1$} ;
\draw (-0.5,3) node[left]{$y$} ;
\draw (0,2) node[left]{$\alpha$} ;
\draw (3,-0.5) node[below]{$x$} ;
\draw (6,0) node[below]{$1$} ;
\draw (10,0) node[below left]{$0$} ;
\draw (16,0) node[below]{$1$} ;
\draw (10,6) node[left]{$1$} ;
\draw (9.5,3) node[left]{$y$} ;
\draw (13,-0.5) node[below]{$x$} ;
\draw (12.5,0) node[below]{$\gamma_a$} ;
\draw (14.5,0) node[below]{$1-\gamma_b$} ;
\draw[-] (10,0) -- (16,0);
\draw[-] (10,6) -- (16,6);
\draw[->,>=stealth] (6,4) -- (14.5,3);
\draw[->,>=stealth] (6,1) -- (10,3);

\end{tikzpicture}
\caption{Action of the baker's map on the unit square. The lower part of the square is mapped to the left part and the upper part is mapped to the right part.}
\label{tik}
\end{figure}
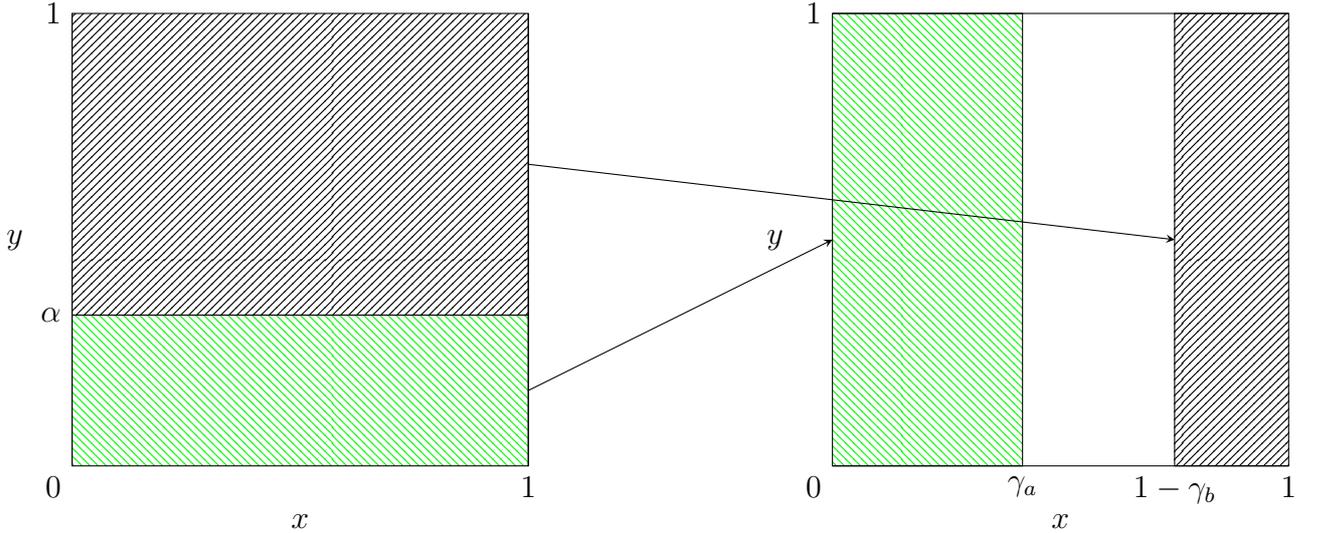

The map $T$ is discontinuous at  the horizontal line $\Gamma:\{y=\alpha\}.$ The singularity curves for $T^{l}, l>1$ are given by $T^{-l}\Gamma$ and they are constructed in this way:  take the preimages $T_Y^{-l}(\alpha)$ of $y=\alpha$ on the $y$-axis according to the map:
\begin{equation}\label{rty}
T_Y(y)=
\bigg \{
\begin{array}{rl}
 \frac{1}{\alpha} y, y<\alpha\\
\frac{1}{\upsilon}y-\frac{\alpha}{\upsilon}, y\ge\alpha.\\
\end{array}
\end{equation}
Then $T^{-l}\Gamma=\{y=T_Y^{-l}(\alpha)\}.$  Any other horizontal line will be a stable manifold of $T.$
The invariant non-wandering set $\Lambda$ will be at the end an attractor foliated by vertical lines which are all unstable manifolds. We denote by $\mathcal{W}^s(\mathcal{W}^u)$ the set of full horizontal (vertical) stable (unstable) manifolds of length  $1$ just constructed. We point out that a stable horizontal manifold $W_s$ will originate two  disjoint full stable manifold when iterated backward by $T^{-1},$ not for the presence of singularity, but because the map $T^{-1}$ will only be defined on the two images of $T(X)$ as illustrated in Fig. 1.
\subsection{The functional space}
In order to obtain useful spectral information from the transfer operator $\mathcal{L}$, its action is restricted to a Banach space $\mathcal{B}$.
We now give the construction of the norms on $\mathcal{B}$ and an associated ``weak'' space $\mathcal{B}_w$ in the case of the baker's map, following partly the exposition in \cite{DDHH}. In this case the norms will be constructed  directly on the horizontal stable manifolds instead of admissible leaves, which
 are smooth curves in approximately the stable direction, see \cite{DL}. As we anticipated above, we follow \cite{DDHH}, but we slightly change the definition of the stable norms by adapting ourselves to that originally introduced in \cite{DL}. Let us explain why.
 First of all we will consider the collection  $\Sigma$
  of all the  intervals $W$ of length less or equal to $1$ that are contained in the same stable manifold 
  $W_s\in \mathcal{W}^s$. We will take such a value equal to $\gamma_a$ for reasons which will be clear in the next considerations. Instead in \cite{DL}, $\Sigma$ was the set of
full horizontal line segments of length $1$ in $X$. The reason for our choice is that we will introduce small target sets $B_n$  and the preimages of such sets will cut the $W_s.$ The smaller pieces generated in this way will enter the three norms given below and therefore it will be useful to count such pieces in $\Sigma.$ \\
 
 We begin to consider  the set $C^{\varrho}$ of continuous complex-valued functions over $X$  with H\"older exponent $0\le \varrho\le 1.$ When we set $C^1$ we mean  $C^\varrho$ with $\varrho=1,$ which is simply Lipschitz. Given a stable leaf $W$ and a H\"older function  $\varphi,$ we  define the norm along $W$ as:
$$
 |\varphi \rvert_{C^{\varrho}(W)}=|\varphi |_{C^0(W)}+H^{\varrho}(\varphi), \  H^{\varrho}(\varphi)=\sup_{\substack{x,y\in W \\x\neq y}}\frac{|\varphi(x)-\varphi(y)|}{|x-y|^{\varrho}}.
$$
 
Another norm will be considered later on, namely
 \begin{equation}\label{TF}
 \lvert \varphi \rvert_{W, \varrho}:=\lvert W\rvert^\varrho \cdot \lvert \varphi \rvert_{C^{\varrho}(W)},
\end{equation}
where $|W|$ denotes the length of $W.$

We now follow closely section 2.2 in \cite{DDHH} and put 
\begin{equation}\label{lll}
|\varphi|_{C^{\varrho}(\mathcal{W}^s)}=\sup_{W\in \mathcal{W}^s}|\varphi \rvert_{C^{\varrho}(W)}.
\end{equation}
We call $C^1(\mathcal{W}^s)$ the set of functions that are Lipschitz along stable manifolds, i.e. for which the quantity (\ref{lll}) is finite. For $\varrho<1,$ we set $C^{\varrho}(\mathcal{W}^s)$ the completion of $C^1(\mathcal{W}^s)$ in the $|\cdot|_{C^{\varrho}(\mathcal{W}^s)}$ norm. Analogously, $C^{\varrho}(W^s)$ denotes the completion of $C^1(W^s)$ in the $|\cdot|_{C^{\varrho}(W^s)}$ norm. One can show that $|\varphi|_{C^{\varrho}(\mathcal{W}^s)}$ is a Banach space for $0\le \varrho\le 1;$ the Banach space $|\varphi|_{C^{\varrho}(\mathcal{W}^u)}$ is defined similarly. By the very structure of the map it follows that whenever $\varphi\in C^{\varrho}(\mathcal{W}^s),$ then $\varphi\circ T\in C^{\varrho}(\mathcal{W}^s).$ This allows to define the transfer operator $\mcl$ associated with  $T$ on the dual space $(C^{\varrho}(\mathcal{W}^s))^*$ as\footnote{Notice that although the map $T$ is discontinuous, the fact that  $\varphi\in C^{\varrho}(\mathcal{W}^s)$ implies $\varphi\circ T\in C^{\varrho}(\mathcal{W}^s),$ allows us to define the transfer operator on the space $(C^{\varrho}(\mathcal{W}^s))^*$ without the need for two scales of space as in \cite{DL}. }
$$
  (\mcl h)(\varphi)=h(\varphi \circ T), \ \forall{\varphi}\in C^{\varrho}(\mathcal{W}^s), \ h\in (C^{\varrho}(\mathcal{W}^s)^*.
$$
If we denote with $m_L$ the Lebesgue measure over $X$ and we take $h\in C^{1}(\mathcal{W}^u),$ then we identify $h$ with the measure $hdm_L$ so that $h\in (C^{\varrho}(\mathcal{W}^s))^*$ and $\mcl h$ is  now identified with the measure having density
\begin{equation}\label{to}
 \mcl h(x)=\bigg{(}\frac{h} {\lvert \det DT\rvert}\bigg{)}\circ T^{-1}(x)=\frac{h\circ T^{-1}(x)}{\alpha^{-1}\gamma_a},
\end{equation}
where the last equality on the r.h.s.\ uses the particular choices for the parameters defining the map $T.$
When $h\in C^{1}(\mathcal{W}^u),$ we therefore set 
\begin{equation}\label{d}
h(\varphi)=\int_X h\varphi \,dm_L,  \quad \text{for $\varphi \in C^{1}(\mathcal{W}^s)$.}
\end{equation}
We are now ready to construct the Banach spaces.

For $h\in C^{1}(\mathcal{W}^u)$ we define the {\em weak norm} of $h$ by
\[
 \lvert h\rvert_w=\sup_{W\in \Sigma}\sup_{\substack{\varphi \in C^1(W) \\ \lvert \varphi \rvert_{C^1(W)}\le 1}}\bigg{\lvert} \int_W h\varphi \, dm \bigg{\rvert}
\]
where $dm$ is the unnormalized Lebesgue measure along $W.$ 

We now   take\footnote{The bound $\beta\le \min(\kappa, 1-\kappa)$ is needed in the proof of Lemma 3.1 in \cite{DDHH}. Notice that such a lemma only require $\beta\le 1-\kappa.$ The additional constraint $\beta\le \kappa$ comes from the fact that in the proof of Lemma 3.1 in \cite{DDHH}, in particular in the estimate of the strong unstable norm, there are not {\em unmatched} pieces since all the stable leaves have full length, see also footnote 3.} $(\kappa,\beta)\in (0,1)$ with  $0<\beta\le \min(\kappa, 1-\kappa).$

The strong stable norm is defined as:
\begin{equation}
\label{ssnorm}
 \lVert h\rVert_s=\sup_{W\in \Sigma}\sup_{\substack{\varphi \in C^{\kappa}(W) \\ \lvert \varphi \rvert_{W, \kappa}\le 1}}\bigg{\lvert} \int_W h\varphi \, dm \bigg{\rvert}.
\end{equation}
We then need to define the strong unstable norm which allows us to compare expectations along different stable manifolds.
If $W_1$ is a subset of the stable manifold $W_s$ we could parameterize it as $( s_{W_1}, t)$ where $s_{W_1}$ is the common ordinate of the points in $W_1$ and $t\in [a_1,b_1]\subset [0,1].$ If $W_2$ is a subset of another stable manifold, parametrized as $(s_{W_2}, t)$ with $t\in[a_2,b_2],$ we pose
$$
d(W_1, W_2)=|s_{W_1}-s_{W_2}|+|[a_1,b_1]\Delta[a_2,b_2]|,
$$
where $\Delta$ means the symmetric difference, and for test functions $\varphi_i\in C^1(W_i), i=1,2:$
$$
d_{\kappa}(\varphi_1, \varphi_2)=\sup_{t\in [a_1, b_1]\cap[a_2,b_2]}|\varphi_1(s_{W_1},t)-\varphi_2(s_{W_2},t)|_{C^{\kappa}(W_i)}.
$$

 The strong unstable norm of $h$ is defined as

\begin{equation}
\label{sunorm}
 \lVert h\rVert_u=\sup_{\epsilon\le 1}\sup_{\substack{W_1, W_2\in \mathcal{W}_s \\ d(W_1, W_2)\le \epsilon}}\sup_{\substack {\varphi_i\in C^1(W_i) \\ \lvert \varphi_i \rvert_{C^1(W)}\le 1\\ d_{\kappa}(\varphi_1, \varphi_2)\le \epsilon}}\frac{1}{\epsilon^{\beta}}\left|
 \int_{W_1}h\varphi_1 dm- \int_{W_2}h\varphi_2 dm\right|,
\end{equation}
Finally we can define the {\em strong norm} of $h$ by \[\lVert h\rVert=\lVert h\rVert_s+b\lVert h\rVert_u,\]
where $b$ is a small constant to be fixed later on.\\

We set $\mathcal B$  to be the completion of $C^{1}(\mathcal{W}^u)$ with respect to the norm $\lVert \cdot \rVert$, and,
 similarly, we define
$\mathcal B_w$ to be the completion of $C^{1}(\mathcal{W}^u)$ with respect to the norm $\lvert \cdot \rvert_w$.\\

We now list a few important results whose proof can be found in \cite{DDHH} and which we will use frequently in the next sections.
\begin{itemize}
\item (Lemma 2.4, \cite{DDHH}) For any $\beta'\in (\beta, 1),$ we have the following sequence of continuous embeddings,
$$
C^1(X)\hookrightarrow C^{\beta'}(\mathcal{W}^u)\hookrightarrow \mathcal{B}\hookrightarrow \mathcal{B}_w\hookrightarrow (C^{1}(\mathcal{W}^s))^*.
$$
Moreover the embedding $\mathcal{B}\hookrightarrow \mathcal{B}_w$ is relatively compact.
\item For $h\in \mathcal{B}$ and $\varphi\in C^1(\mathcal{W}^s)$ we have
\begin{equation}\label{pif}
|h(\varphi)|\le |h|_w |\varphi|_{C^1(\mathcal{W}^s)}.
\end{equation}
Moreover
\begin{equation}\label{ggg}
|h|_w\le \|h\|_s.
\end{equation}
\item (Lemma 4.1, \cite{DDHH}). The transfer operator $\mcl$ is a bounded, linear operator on both $\mathcal{B}$ and $\mathcal{B}_w.$
\item (Lemma 3.1, \cite{DDHH}). If $g\in C^1(X)$ and $h\in\mathcal{B},$ then\footnote{In \cite{DDHH} the constant on the r.h.s.of (\ref{cico}) is simply $3.$ We should modify it since the presence of unmatched pieces adds two more contributions of  the factor $|g|_{C1(X)}\|h\|_s$ in the computation of the strong unstable norm. Finally the factor $b$ comes from the very definition of the Banach norm $\|\cdot\|.$}
\begin{equation}\label{cico}
\|gh\|\le (5b+1) |g|_{C^1(X)}\|h\|.
\end{equation}
\item (Theorem 2.5, \cite{DDHH}). $\mcl$ is quasi-compact as an operator on $\mathcal{B}$. Its spectral radius is $1$ and its essential spectral radius is bounded by $\max\{ \lambda_a^{\kappa}, \alpha^{\beta}\}<1.$

Then:\\
(a) $\mcl$ has $1$ as a simple eigenvalue and all other eigenvalues have modulus less than $1.$\\
(b) There is a unique solution $\mu \in \mathcal{B}$ of $\mcl \mu = \mu,$ with $\mu(1)=1$ and such a solution is the Sinai-Bowen-Ruelle,  SRB-measure. Its conditional measures on unstable leaves are equal to arclength.\\
(c) There exists $\textfrak{a}<1$ and $\textfrak{C}>0$ such that for any $h\in \mathcal{B}$ with $h(1)=1,$ we have
$$
\|\mcl^n h-\mu\|\le \textfrak{C} \textfrak{a}^n\|h\|, \ \forall n\ge 0.
$$

    \end{itemize}



\section{The spectral approach for EVT}
\subsection{Formulation of the problem}\label{fop}
We now take a ball $B(z, r)$ of center $z\in X$ and radius $r$ and denote with $B(z, r)^c$ its complement, where  $d(\cdot, \cdot)$  is the Euclidean metric.

Let us consider for $x\in X$ the observable\footnote{See section \ref{dse} for a discussion about the choice of the observable.}
\begin{equation}\label{OO}
\Xi(x)=-\log d(x,z)
\end{equation}
and the function
\begin{equation}\label{IP}
M_n (x):=\max\{\Xi(x), \cdots, \Xi(T^{n-1}x)\}.
\end{equation}
For $ u \in \mathbb{R}_+$, we are interested in the distribution of $M_n\le u,$ where $M_n$ is now seen as a random variable on the  probability space $(X, \mu).$  Notice that the event $\{M_n\le u \}$ is equivalent to the set  $\{ \Xi\le u, \ldots, \Xi\circ T^{n-1}\le u\}$ which in turn coincides with  the set
 $$
 E_n:=B(z,e^{-u})^c\cap  T^{-1}B(z,e^{-u})^c\cap\cdots \cap  T^{-(n-1)} B(z,e^{-u})^c.
 $$
  We are therefore following points which will enter the ball $B(z,e^{-u})$  for the first time after at least $n$ steps (see e.g.  eq. (\ref{retour}) in sect. \ref{dse}), and $u\mapsto \mu(E_n)$ is the distribution function of the maximum of the observable $\Xi\circ T^j, j=0,\dots, n-1.$ It is well known from elementary probability that the distribution of the maximum of a  sequence of i.i.d.\ random variables is degenerate. One way to overcome this is to make the {\em boundary level} $u$ depend upon the time $n$ in such a way the sequence $u_n$ grows to infinity and gives, hopefully,  a non-degenerate limit for $\mu(M_n\le u_n).$\\

From now on we set: $B_n=B(z, e^{-u_n})$ and $B_n^c$ the complement of $B_n;$ the dependence upon the "center" $z$ will be discussed in Remark \ref{RRR1}.\\
 We easily have
\begin{equation}\label{RI}
\mu (M_n\le u_n)=\int {\bf 1}_{B_{n}^c}(x) {\bf 1}_{B_{n}^c}(Tx)\cdots {\bf 1}_{B_{n}^c}(T^{n-1}x) \,d\mu.
\end{equation}
 By introducing the perturbed operator, for $h\in \mathcal B$:
\begin{equation}\label{PO}
\mcl_{n}h:=\mcl({\bf 1}_{B_{n}^c} h),
\end{equation}
we can write (\ref{RI}) as
\begin{equation}\label{RI2}
\mu (M_n\le u_n)= \mcl_{n}^n\mu(1).
\end{equation}
Notice that
$$
\mcl_n^kh=\mcl(h{\bf 1}_{B_{n}^c}{\bf 1}_{B_{n}^c}\circ T\dots{\bf 1}_{B_{n}^c}\circ T^{k-1}).
$$
\subsection{Target sets and the space $\mathcal{B}$}
We explicitly used above  two facts which require justification. 
\subsubsection {${\bf 1}_{B_{n}^c}$\label{subsub1} is in the Banach space $\mathcal{B}$} Of course the same proof should  hold for functions of  the form  ${\bf 1}_{B_{n}^c}{\bf 1}_{B_{n}^c}\circ T\dots{\bf 1}_{B_{n}^c}\circ T^{k-1}.$ 
The geometric shape of the sets $B_{n}^c\cap T^{-1}B_{n}^c \cap  \cdots \cap T^{-(k-1)}B_{n}^c$
plays an important role in the proof. Those sets are equivalently given by $\left(B_{n}\cup T^{-1}B_{n} \cup  \cdots \cup T^{-(k-1)}B_{n}\right)^c$ and we call $B^{(k)}$ one of them.
 Suppose we could find a sequence $\{h_l\}_{l\in \mathbb{N}}$ in $C^1(X)$ which is Cauchy in $\mathcal{B}$ and such that for any  $\varphi\in C^1(\mathcal{W}^s) $ we have
\begin{equation}\label{lio}
\int h_l \varphi dm\rightarrow \int{\bf 1}_{B^{(k)}}\varphi dm, \ l\rightarrow \infty.
\end{equation}
Then ${\bf 1}_{B^{(k)}}$ is in $\mathcal{B}$ since the latter  is continuously embedded in the dual space $(C^1(\mathcal{W}^s))^*,$ see Lemma 2.4 in \cite{DDHH} quoted in section 2. We now construct such a sequence. Take a bump function $\phi$ with support in the unit ball of $\mathbb{R}^2,$ normalized to $1$ and put $\phi_{\delta}(x):=\frac{1}{\delta^2}\phi(\frac{x}{\delta}),$ where $\delta$ designs the $\delta$-neighborhood $B^{(k)}_{\delta}:=\{x\in \mathbb{R}^2; \text{dist}(x, B^{(k)})\le \delta\}$ of $B^{(k)}.$
Then define the convolution product
\begin{equation}\label{convoy}
h_{\delta}:= {\bf 1}_{B^{(k)}_{\delta}}*{\phi_{\delta}}.
\end{equation}
Then  $h_{\delta}$ is equal to $1$ on $B^{(k)}$ and equal to zero outside the $2\delta$-neighborhood $B^{(k)}_{2\delta}.$ Moreover it is straightforward to get (\ref{lio}). It remains to prove that $\{h_{\delta}\}_{\delta>0}$ is Cauchy, for $\delta\rightarrow 0.$ Call $U_{\delta}=B^{(k)}_{2\delta}\setminus B^{(k)}.$ To control the strong stable norm, we observe that, if $\delta_2<\delta_1$
\begin{equation}\label{schif}
\left|\int_W(h_{\delta_1}-h_{\delta_2})\varphi dm\right|=\left|\int_{W\cap U_{\delta_1}}(h_{\delta_1}-h_{\delta_2})\varphi dm\right|\le 2 |W|^{-\kappa}|W\cap U_{\delta_1}|\le 2 |W\cap U_{\delta_1}|^{1-\kappa}
\end{equation}
There are  now two cases:\\
(i) suppose first that $|W|\le \delta_1;$ then (\ref{schif})$\le \delta_1^{1-\kappa}.$\\
(ii) suppose now that $|W|>\delta_1.$ As we will write in footnote $5,$ each $W$ could meet at most $k-1$  sets  of the form $T^{-j}B_n, j=1,\dots,k.$ These sets are ellipses with the major axis along the stable manifolds. Therefore each $W$ could meet at most $(k-1)$ $\delta_1-$neighborhoods of the preimages $T^{-j}B_n, j=1,\dots,k.$ It is a simple exercise to show that the maximum intersection of $W$ with one of the previous  $\delta_1-$neighborhoods is bounded by a constant $\tilde{C}$ depending only on the size of $X$ times $(\delta_1)^{1/2}.$ Then (\ref{schif})$\le 2(k-1)[\tilde{C}\delta_1^{1/2}]^{1-\kappa}$. In conclusion
$$
\|h_{\delta_1}-h_{\delta_2}\|_s\le  2(k-1)[\tilde{C}\delta_1^{1/2}]^{1-\kappa}.
$$
We now compute the strong unstable norm. We proceed in two different manners. 
First of all we could  simply bound the difference
\begin{equation}\label{fin}
\frac{1}{\epsilon^{\beta}}\left|\int_{W_1}(h_{\delta_1}-h_{\delta_2})\varphi_1 dm-\int_{W_2}(h_{\delta_1}-h_{\delta_2})\varphi_2 dm\right|
\end{equation}
by $\frac{1}{\epsilon^{\beta}}4(k-1)\tilde{C}\delta_1^{1/2},$  since the term $h_{\delta_1}-h_{\delta_2}$ is different from zero only on the intersections of the manifolds $W_1, W_2$ with a $\delta_1$-neighborhood. 
We now pass to a finer estimate of (\ref{fin}) and we will use the same trick to control the strong unstable norm in the Lasota-Yorke inequality, see section \ref{ese}. We will see that giving two stable manifolds $W_1, W_2$ at a distance at most $\epsilon,$ there will be two {\em matched} subsets of those manifolds whose points have the same $y$-ordinate, and the $x$-components belong to the same interval. The complement of the matched piece on each manifold has length less or equal to $\epsilon$ ({\em unmatched pieces}). 
The contribution given by those  two unmatched pieces  is
$
2(k-1)\epsilon^{1-\beta}.
$
We now parametrize the two matched pieces, where all the points of $W_1$ (resp. $W_2$), have the same ordinate $s_1$ (resp. $s_2$) and the abscissa $t$ varies in the interval $I_{1,2}.$ Then we can write the difference of the two integrals in (\ref{fin}) as
\begin{equation}\label{io}
\left|\int_{I_{1,2}}(h_{\delta_1}-h_{\delta_2})(s_1,t)\varphi_1(s_1,t)dt-\int_{I_{1,2}}(h_{\delta_1}-h_{\delta_2})(s_2,t)\varphi_2(s_2,t)dt\right|.
\end{equation}
Notice that from now on  it only matters the intersection of $W_1, W_2$ with $U_{\delta_1},$ since outside it the quantity (\ref{io}) is zero.
We begin to control the piece $D_1:=\int_{I_{1,2}}h_{\delta_1}(s_1,t)\varphi_1(s_1,t)dt-\int_{I_{1,2}}h_{\delta_1}(s_2,t)\varphi_2(s_2,t)dt,$ the other one involving $h_{\delta_2},$ call it $D_2,$ being the same. We split it as
$$
D_1=\int_{I_{1,2}} h_{\delta_1}(s_1,t)\varphi_1(s_1,t)dt-\int_{I_{1,2}}h_{\delta_1}(s_1,t)\tilde{\varphi}_1(s_1,t)dt+$$
$$
\int_{I_{1,2}}h_{\delta_1}(s_1,t)\tilde{\varphi}_1(s_1,t)dt-\int_{I_{1,2}}h_{\delta_1}(s_2,t)\varphi_2(s_2,t)dt,
$$
where we put
$$
\tilde{\varphi}_1(s_1,t)=\varphi_2(s_2,t), \ t\in I_{1,2}.
$$
The absolute value of the first difference in $D_1$ is simply bounded by $\epsilon$ (remember $d_{\kappa}(\varphi_1, \varphi_2)\le \epsilon$), times $2(k-1)\tilde{C}\delta_1^{1/2}.$ The absolute value of the second piece in $D_1$ is bounded by $
2(k-1)\tilde{C}\delta_1^{1/2}$ times $H(h_{\delta_1})\epsilon$,
where $H(h_{\delta_1})\le \frac{C_{\phi}}{\delta^3_1}$ is the Lipschitz constant of $h_{\delta_1}$ and $C_{\phi}$ depends only on $\phi.$ By dividing for $\epsilon^{\beta}$ we have that $|D_1|+|D_2|$ is bounded by 
\begin{equation}\label{refi}
2(k-1)\frac{\hat{C}}{\delta_1^{5/2}}\epsilon^{1-\beta},
\end{equation}
where $\hat{C}=\max(\tilde{C}, C_{\phi}).$ In conclusion we have (the term coming from the unmatched pieces being incorporated  in (\ref{refi}))
$$
\|h_{\delta_1}-h_{\delta_1}\|_u\le \min\{4(k-1)\frac{\hat{C}}{\delta_1^{5/2}}\epsilon^{1-\beta}, \frac{1}{\epsilon^{\beta}}4(k-1)\tilde{C}\delta_1^{1/2}\}.
$$
By interpolating we finally get
$$
\|h_{\delta_1}-h_{\delta_1}\|_u\le 4(k-1)\hat{C}\epsilon^{1-(q+\beta)}\delta_1^{3q-5/2},
$$
where $q\in (0,1)$ must be chosen such that $\beta+q<1$ and $q>5/6.$
\subsubsection {${\bf 1}_{B^{(k)}}h\in \mathcal{B}$}\label{subsub2} 

Take again a set like $B^{(k)}$ such that ${\bf 1}_{B^{(k)}}\in \mathcal{B};$ what we need is that ${\bf 1}_{B^{(k)}}h\in \mathcal{B},$ where $h\in \mathcal{B}.$ First of all we have to define the object ${\bf 1}_{B^{(k)}}h\in \mathcal{B}.$ Take a sequence $\{h_l\}_{l\ge 1}\in C^{1}(\mathcal{W}^u)$ converging to $h$ in the $\mathcal{B}-$norm. Whenever ${\bf 1}_{B^{(k)}}h_l\in \mathcal{B},$ we set
\begin{equation}\label{poi}
{\bf 1}_{B^{(k)}}h=\lim_{l\rightarrow \infty}{\bf 1}_{B^{(k)}}h_l,
\end{equation}
provided the limit exists. 
So, first of all we have to show that for any $l,$ ${\bf 1}_{B^{(k)}} h_l\in \mathcal{B}.$ This is proved exactly in the same manner as in the previous  item, where $h_l=1.$

We are left by showing that ${\bf 1}_{B^{(k)}} h_l$ is Cauchy. To get it we begin to prove a preliminary result, namely in the computation of the strong stable and unstable norm of ${\bf 1}_{B^{(k)}} f,$  where ${\bf 1}_{B^{(k)}} \in \mathcal{B}$ and $f\in C^{1}(\mathcal{W}^u),$  such norms can be computed by using directly the (non smooth) product ${\bf 1}_{B^{(k)}} f.$  
 By (\ref{poi}), if we call $g_l$ a sequence converging to ${\bf 1}_{B^{(k)}},$ we put
$$
{\bf 1}_{B^{(k)}} f=\lim_{l\rightarrow \infty}g_l f,
$$
but now we are sure the limit exists since the sequence $g_lf$ is Cauchy by (\ref{cico}):
$$
\|(g_l-g_k)f\|\le (5b+1) \ \|g_l-g_k\|\ |f|_{C^1(X)}.
$$
Therefore we have that 
$$
\|g_lf\|\rightarrow \|{\bf 1}_{B^{(k)}} f\|,
$$
and the norm on the l.h.s. is the norm "before" completion. 
So we have
$$
 \|{\bf 1}_{B^{(k)}} f\|=\lim_{l\rightarrow \infty}\left(\|g_lf\|_s+ b\|g_lf\|_u\right).
$$
The result follows by replacing the strong stable and unstable norms on the right hand side respectively with the expressions (\ref{ssnorm}) and (\ref{sunorm}) and by passing the limit inside the integrals by dominated convergence. 
The same argument shows also that for $h\in C^{1}(\mathcal{W}^u):$

\begin{equation}\label{sez}
{\bf 1}_{B^{(k)}}h(\varphi)={\bf 1}_{B^{(k)}}(h \varphi)=\int_X {\bf 1}_{B^{(k)}}h_l\varphi dm.
\end{equation}
We now return to prove that  ${\bf 1}_{B^{(k)}} h_l$ is Cauchy by computing the norm of the generic element ${\bf 1}_{B^{(k)}} h,$ $h\in C^{1}(\mathcal{W}^u),$ directly  along the stable manifolds and showing that it is bounded by a constant depending only on $B^{(k)}$ times $\|h\|.$\footnote{Look at Lemma 4.3 in \cite{DDMM} for a similar computation.}
 If we take a stable manifold $W$ of length at most $\gamma_a,$ the intersection $W\cap B^{(k)}$ is given by a finite number $\#(W,k)$ of smaller stable intervals $W_i, 1\le i\le \#(W,k).$ The latter are generated by removing from $W$ the intersections with the preimages of $B_n$ up to order $k,$ see the beginning of section \ref{subsub1}; therefore $\#(W,k)\le k, \forall W,$ as explained in footnote $7.$ By using the arguments in {\bf A2} or in {\bf A3} in the next sections, it is straightforward to check that  $||{\bf 1}_{B^{(k)}}h||_s\le k ||h||_s\;\; \text{and} \;\; |{\bf 1}_{B^{(k)}}h|_w\le k |h|_w.$ It remains to compute the strong unstable norm and this reduces to bound the difference, for a smooth $h$: $\frac{1}{\epsilon^{\beta}}\left|
 \int_{W_1\cap B^{(k)}}h\varphi_1 dm- \int_{W_2\cap B^{(k)}}h\varphi_2 dm\right|,$ where $W_1 W_2, \varphi_1, \varphi_2$ verify the constraints given in (\ref{sunorm}). We remind that the preimages of $B_n$ of order $l\le k$ are ellipsis whose axis along the vertical unstable direction has length at most  $\alpha^l.$ We now split the computation in two parts. Suppose first that $\epsilon\ge0.5 \alpha^k.$ Then a rough estimate gives 
 \begin{equation}\label{u1}
     \frac{1}{\epsilon^{\beta}}\left|
 \int_{W_1\cap B^{(k)}}h\varphi_1 dm- \int_{W_2\cap B^{(k)}}h\varphi_2 dm\right|\le 4 \frac{1}{\alpha^{k\beta}} \|h\|_s.
 \end{equation}
 Take now $\epsilon <0.5 \alpha^k.$ By following the strategy used in dealing with  the strong unstable norm in sections \ref{subsub1} and \ref{ese}, we split the difference above over unmatched and matched pieces. Suppose now that $\#(W_1,k)>\#(W_2,k);$ then there could be at most  $\#(W_1,k)$ matched pieces given a final contribution bounded by $k\|h\|_u.$ We now count the unmatched pieces. First,  there are the two  intervals of length $\le \epsilon$ at the extremities of $W_1, W_2.$ Notice that there could be at most $\#(W_2,k)-1$ preimages of $B_n$ which cut both $W_1$ and $W_2;$ therefore there will be at most $2[\#(W_2,k)-1]$ unmatched pieces generated at the intersection with the boundaries of such preimages and the length of each of  those unmatched pieces is bounded by a constant $C_1(B_n,k),$ depending solely on the radius of the ball $B_n$ and on $k,$ times $\sqrt{\epsilon},$ as we argued in section \ref{ese}. But now there will be also $[\#(W_1,k)]-[\#(W_2,k)]$ unmatched pieces given by preimages of $B_n$ which cut $W_1$ but not $W_2.$ The length of those pieces will be again bounded by a constant $C_2(B_n,k)$ times $\sqrt{\epsilon}.$ Hence, we get for $\epsilon<0.5 \alpha^k:$\footnote{We incorporate the exponent $\kappa$ directly into the constants $C_1(B_n, k), C_2(B_n, k).$}
 \begin{equation}\label{uio}
 \left|
 \int_{W_1\cap B^{(k)}}h\varphi_1 dm- \int_{W_2\cap B^{(k)}}h\varphi_2 dm\right|\le 
\epsilon^{\beta}k\|h\|_u+\left[2\epsilon^{\kappa}+[2C_1(B_n,k)+ C_2(B_n,k)]k\epsilon^{\kappa/2}\right]\|h\|_s.
 \end{equation}
 In conclusion we get for $2\beta<\kappa:$
 $$
 ||{\bf 1}_{B^{(k)}}h||_u\le \max\{4 \frac{1}{\alpha^{k\beta}} , 2+[1+2C_1(B_n,k)+ C_2(B_n,k)]k\}\|h\|_u,
 $$
which immediately implies the Cauchly property for the sequence ${\bf 1}_{B^{(k)}} h_l.$\\

We finally  specialize (\ref{sez}) when $h$ is a Borel measure $\tilde{\mu}\in \mathcal{B}.$ Suppose also that the $\tilde{\mu}-$measure of the boundary of $B^{(k)}$ is zero. This happens for instance if $\tilde{\mu}$ is the SRB-measure, which is our case. Then if $h_l\rightarrow \tilde{\mu}$ and for a test function $\varphi$ we have
$$
{\bf 1}_{B^{(k)}}\tilde{\mu}(\varphi)=\lim_{l\rightarrow \infty}{\bf 1}_{B^{(k)}}h_l(\varphi)=\lim_{l\rightarrow \infty}\int {\bf 1}_{B^{(k)}}h_l\varphi dm=\int {\bf 1}_{B^{(k)}} \varphi d\tilde{\mu}=\tilde{\mu}({\bf 1}_{B^{(k)}} \varphi).
$$
where the third equality follows from Portmanteau theorem. 
This fact will be explicitly used in equation (\ref{DELTA2}) below.


\subsection{The perturbative approach}
The quasi-compacity of the operator $\mcl$ stated in (Theorem 2.5, \cite{DDHH}) and quoted in section 2, implies that\footnote{If $\varphi$ is a test function, eq. (\ref{QC}) means that $(\mcl h)(\varphi)=Z(h)\mu(\varphi)+Q(h)(\varphi).$}
\begin{equation}\label{QC}
\mcl=\mu\otimes Z+Q,
\end{equation}
where again  $\mu=\mcl  \mu$ is the  SRB measure in $\mathcal{B}$ normalized in such a way that $\mu(1)=1$ and spanning the one-dimensional eigenspace corresponding to the eigenvalue $1;$  $Z$ is the generator of the one-dimensional eigenspace of $\mcl^*$ in the dual space $\mathcal B^*$  corresponding to the eigenvalue $1$ and normalized in such a way that $Z(\mu)=1;$ finally $Q$ is a linear operator on $\mathcal B$ with spectral radius $sp(Q)$ strictly less than one.
We now introduce the assumptions which allow us to apply the perturbative technique of Keller and Liverani \cite{KL}. They are split in two blocks: {\bf A0}, {\bf A2} and {\bf A3} are needed to get  the quasi-compact decomposition (\ref{QCE}),  which extends to the perturbed operators $\mcl_n$ the same decomposition for $\mcl$ required by {\bf A1}.  The assumptions {\bf A4} and {\bf A5} together with (\ref{QCE}) are finally needed to apply the perturbative technique in \cite{KL} we referred  to at the beginning of this section.
\begin{itemize}
\item {\bf A0} $\mathcal{B}$ is continuously embedded into $\mathcal{B}_w.$
\item {\bf A1} The unperturbed operator $\mcl$ is quasi-compact in the sense expressed by (\ref{QC}).
    \item {\bf A2} There are constants $0<\rho<1, D_1, D_2, D_3>0,  \rho<M,$ such that $\forall n$ sufficiently large, $\forall h\in \mathcal{B}$ and $\forall k\in \mathbb{N}$ we have
        \begin{eqnarray}
       |\mcl_n^k h|_w\le D_1 M^k |h|_w,\\
       ||\mcl_n^k h||\le D_2 \rho^k||h||+D_3M^k|h|_w.\label{LLYY}
        \end{eqnarray}
        This will be proved below.
        \item {\bf A3}  We can bound the weak norm of  $(\mcl-\mcl_{n})h,$ with $h\in \mathcal B,$ in terms of the norm of $h$ as:
            $$
            |(\mcl-\mcl_{n})h|_w\le \chi_n ||h||\,
            $$
            where $\chi_n$ is a sequence converging to zero.  We give immediately the proof of this fact since it is  achieved  by a simple adaptation of the computation of the strong stable norm in the proof of item {\bf A2} below. Looking in fact at the notations and at the steps of such a demonstration, we have  to control the term: $\int_W(\mcl-\mcl_{n})h\varphi dm=\int_W \mcl({\bf 1}_{B_n} h)\varphi dm =\sum_{i=1,2}\int_{W_{i}\cap B_n} h(y) \varphi(Ty)\alpha\, dm(y)\le \|h\|_s |B_n|^{\kappa}.$  Then  $\chi_n=|B_n|^{\kappa}.$\\
           
            Thanks to the  assumptions {\bf A2} ({\em uniform Lasota-Yorke inequalities}) and {\bf A3} ({\em closeness of the operators in the triple norm}), we can apply the spectral  theory in \cite{KL2},\footnote{This spectral theory also requires that if $z$ is in the spectrum of $\mcl_n$ and $|z|>s,$ then $z$ is not in the residual spectrum of $\mcl_n.$ This last fact is guaranteed by {\bf A0} which implies that the spectral radius of $\mcl_n$ is bounded by $s.$ } and get  that the decomposition (\ref{QC}) holds for $n$ large enough, namely
\begin{align}\label{QCE}
\lambda_{n}^{-1}\mcl_{n}=\mu_{n}\otimes Z_{n}+Q_{n}, \\
 \mcl_{n}\mu_{n}=\lambda_{n}\mu_{n},\\
 Z_{n}\mcl_{n}=\lambda_{n}Z_{n},\\
 Q_{n}(\mu_{n})=0,\; \ Z_{n}Q_{n}=0, \label{QCF}
\end{align}
where $\lambda_{n}\in \mathbb{C},$ $ \mu_{n}\in \mathcal B,$ $   Z_{n}\in \mathcal B^*,$ $  Q_{n}\in \mathcal B,$ and $\sup_n sp(Q_n)\le sp(Q).$
We observe that the previous assumptions (\ref{QCE})--(\ref{QCF}) imply that $Z_{n} (\mu_{n})=1, \forall n;$ moreover $\mu_{n}$ can be normalized in such a way that $\mu_n(1)=1$ and $Z(\mu_n)=1,$ see \cite{KL}.\\
           
            We now state  assumption {\bf A4} leaving  {\bf A5} to section \ref{fc}.  
            \item {\bf A4} If we define
            \begin{equation}\label{DELTA}
\Delta_{n}=Z(\mcl-\mcl_{n})(\mu),
\end{equation}
and for $h\in \mathcal{B}$
\begin{equation}\label{ETA}
\eta_{n}:=\sup_{||h||\le 1}|Z(\mcl(h {\bf 1}_{B_{n}}))|,
\end{equation}
we must assume that
\begin{equation}\label{R1}
\lim_{n\rightarrow \infty}\eta_n=0,
\end{equation}
\begin{equation}\label{R2}
\eta_{n}||\mcl({\bf 1}_{B_{n}}\mu)||\le \text{const} \ \Delta_{n}.
\end{equation}
\end{itemize}

Notice that {\bf A0} and {\bf A1} are the content of the aforementioned Lemma 2.4 and Theorem 2.5 in \cite{DDHH}; it remains to prove {\bf A2} and {\bf A4}. The proofs, especially that of {\bf A2}, are quite long and we will defer them to the next sections.\\
\section{Proof of {\bf A2}}\label{A2}
We start by  noticing that the proof we present is also valid for the unperturbed operator, and this will be explicitly used in the following.
The proof is basically the same as the proof of Proposition 4.2 in \cite{DDHH}, with the difference that we allow subsets of the stable manifolds of length less than $\gamma_a.$ By density of $C^1(\mathcal{W}^u)$ in both $\mathcal{B}$ and $\mathcal{B}_w,$ it will be enough to take $h$ is such a smaller space. We have to control integrals of type: $\int_W \mcl_nh \varphi \,dm,$ where $W\in \Sigma$ and $\varphi \in C^1(W)$ (resp.\ $C^{\kappa}(W)$), according to the estimate of the weak (resp.\ strong) norm. \\
\subsection{Weak norm} 
Let us start with the weak norm and consider for instance $\mcl_n^3.$ We have
\begin{equation}\label{tor}
\int_W \mcl_n^3 h \varphi dm=\int_{W}\frac{(h{\bf1}_{\bn}{\bf1}_{\bn}\circ T {\bf 1}_{\bn}\circ T^2)(T^{-3}x) \varphi(x)}{\alpha^{-3}\gamma_a^3} dm(x).
\end{equation}
 We successively perform three changes of variable  along the backward images of $W$ each with Jacobian $\gamma_a,$ which will cancel the factor  $\gamma_a^3$ in the denominator in (\ref{tor}). But we have now to understand how those backward images are produced.
 
 Since $|W|\le \gamma_a,$ its inverse image will give  rise to at most two pieces $A_1, A_2$ of length respectively $a_1, a_2$ such that $a_1+a_2\le \frac{|W|}{\gamma_a}.$ But now $T^{-2}W$ is equal to $T^{-2}(A_1\cup A_2)$ and
   $T^{-1}(A_1)$ (resp. $T^{-1}(A_2)$) will produce the pieces $B_1, B_2$ (resp. $C_1, C_2).$ If we denote with $b_{1,2}, c_{1,2}$ the length of those pieces we have $b_1+b_2\le \frac{a_1}{\gamma_a}, c_1+c_2\le \frac{a_2}{\gamma_a}.$

  Our last step consists in iterating backward $B_{1,2}, C_{1,2};$ each of them will be expanded of a factor $\gamma_a,$  so we get
  $$
  (\ref{tor})=\sum_{*\in \{1,2\}}\alpha^3\int_{T^{-1}B_*}(h{\bf1}_{\bn}{\bf1}_{\bn}\circ T {\bf 1}_{\bn}\circ T^2)(x) \varphi(T^3x)dm(x)+
  $$
  $$
  \sum_{*\in \{1,2\}}\alpha^3\int_{T^{-1}C_*}(h{\bf1}_{\bn}{\bf1}_{\bn}\circ T {\bf 1}_{\bn}\circ T^2)(x) \varphi(T^3x)dm(x),
  $$
  where the measure $m$ is again the unnormalized Lebesgue measure.
   We now cut the eight  intervals $T^{-1}B_*, T^{-1}C_*$ into pieces of length $|W|.$ For instance $T^{-1}B_1$ will give $\frac{b_1}{\gamma_a|W|}$ pieces of length $|W|$ plus two pieces of length less than $|W|.$
But in the present case we have to add twice $3$ to those pieces for the presence of  \footnote{If we consider higher iterates of $\mcl,$ for instance of order $k,$ we should control terms like $W\cap \bn \cap T^{-1}\bn \cap \dots \cap T^{-(k-1)}\bn,$ where $W$ is a piece of stable manifold. Notice that each preimage $T^{-l}B_n, l=1,\dots,k-1,$ is contained in $2^l$ disjoint horizontal rectangles. Therefore $W$ could meet at most $k-1$ of such rectangles of different generation and hence at most $k-1$ preimages of $B_n.$ This implies that the complement in $W$ of such intersection is at most composed by $k$ connected intervals}$B_n^c$ .  Then we split the previous eight integrals over those smaller pieces which we denote with $\tilde{W}^{(n)}_j$, with $1\le j\le M_3,$ where $M_3=\frac{1}{\gamma_a|W|}(b_1+b_2+c_1+c_2)+2^3(1+3)$ is just an upper bound of   the number of the smaller  pieces after three backward iterations. Then we can write
$$
(\ref{tor})\le \alpha^3\sum_{j=1}^{M_3}\int_{\tilde{W}^{(n)}_j} h(x) \varphi(T^3x)dm(x)
$$

Using the preceding bounds on the couples $b_{1,2}, c_{1,2}, a_{L,R},$ we see that   $M_3 \le \frac{1}{|W|\gamma_a^3}|W|+2^3(1+3).$  By iterating backward $k$ times such a cardinality becomes $$M_k\le \frac{1}{\gamma_a^k}+2^{k}(1+k).$$

In order to compute the weak norm of $\mcl_n^3$ we must take a test function $\varphi$ verifying $|\varphi|_{C^{1}(W)}\le 1.$ If we now take two points $y_1, y_2\in \tilde{W}_j^{(n)}$ we have
$$\frac{|\varphi( T^3x)-\varphi( T^3y)|}{|x-y|}=\frac{|\varphi( T^3x)-\varphi( T^3y)|}{|T^3x-T^3y|} \frac{|T^3x-T^3y|}{|x-y|}\le H^{1}(\varphi)\gamma_a^{3}\le H^{1}(\varphi)
$$
 
where $ H^{1}(\varphi)$ is the H\"older exponent of $\varphi$ (on $W$). Therefore
$$
|\varphi\circ T^3|_{C^{1}(\tilde{W}_j^{(n)})}=|\varphi\circ T^3|_{C^0(\tilde{W}_j^{(n)})}+H^1(\varphi\circ T^3)\le 1.
$$
 By multiplying  and dividing the integral in (\ref{tor}) by $|\varphi\circ T^3|_{C^{1}(\tilde{W}_j^{(n)})}$ we finally get, for any $k\ge 1$ and remembering that $\alpha=1/2$:
\begin{equation}\label{fou}
|\mcl_n^k h|_w\le\left[ \left(\frac{\alpha}{\gamma_a}\right)^k+1+k \right] |h|_w.
\end{equation}
\begin{remark}\label{nnnn}
The kind of partitioning we consider above, namely by cutting the preimages into pieces of length $|W|$ was not really necessary to estimate the weak norm, but it is particularly adapted to control the strong stable norm, see below. For this reason we anticipated it here. We will see how one could have been  proceeded more directly in estimating the strong unstable norm even if the weaker bound one gets on the cardinality of the preimages will not improve the final result.
\end{remark}
\subsection{ Strong stable norm}\label{dfgj}
To compute  the strong stable norm, we closely follow the same calculations of section 4.1 in \cite{DDHH} and we write, still for the third iterate of the perturbed operator and using the notations above:
\begin{equation}\label{kri}
\int_W \mcl_n^3 h \varphi dm=\alpha^3\left\{\sum_{j}^{M_3}\int_{\tilde{W}_{j}^{(n)}}  h(y) [\varphi(T^3y)-\overline{\varphi_{j,n}}]dm(y)+\int_{\tilde{W}_{j}^{(n)}}  h(y)\overline{\varphi_{j,n}}dm(y)\right\},
\end{equation}
where
$$
\overline{\varphi_{j,n}}=\frac{1}{|\tilde{W}_{j}^{(n)}|}\int_{\tilde{W}_{j}^{(n)}}\varphi(T^3y)dm(y).
$$
Since $|\overline{\varphi_{j,n}}|\le \sup_{W}|\varphi|,$ we have immediately that the rightmost term in (\ref{kri}) is bounded by the right hand side of (\ref{fou}).
Instead the first piece on the right hand side is bounded by
\begin{equation}\label{rrr}
\alpha^3\sum_{j}^{M_3} \|h\|_{s}|\varphi\circ T^3-\overline{\varphi}_{j,n}|_{\tilde{W}_{j}^{(n)}, \kappa}=\sum_{j}^{M_3}\alpha^3 \|h\|_{s} |\tilde{W}_j^{(n)}|^{\kappa}|\varphi\circ T^3-\overline{\varphi}_{j,n}|_{C^{\kappa}(\tilde{W}_j^{(n)})}
\end{equation}
But $|\varphi \circ T^3-\overline{\varphi}_{j,n}|_{C^{\kappa}(W_j^{(n)})}= |\varphi \circ T^3-\overline{\varphi}_{j,n}|_{C^0(W_j^{(n)})}+\sup_{x\neq y}\frac{|\varphi( T^3x)-\varphi( T^3y)|}{|x-y|^{\kappa}}.$
We now treat the last term on the right hand side, the first giving the same result after having noticed that $|\varphi \circ T^3-\overline{\varphi}_{j,n}|= |\varphi( T^3x)-\varphi( T^3x^*)|$, being $x^*$ some point in $W_j^{(n)}$ by the mean value theorem, and  having multiplied and divided it by $\frac{|T^3x-T^3y|^{\kappa}}{|x-y|^{\kappa}}.$
We have: $$\frac{|\varphi( T^3x)-\varphi( T^3y)|}{|x-y|^{\kappa}}=\frac{|\varphi( T^3x)-\varphi( T^3y)|}{|T^3x-T^3y|^{\kappa}} \frac{|T^3x-T^3y|^{\kappa}}{|x-y|^{\kappa}}\le$$
$$
H^{\kappa}(\varphi)\gamma_a^{3\kappa}\le \gamma_a^{3\kappa} |\varphi|_{C^{\kappa}(W)}= \gamma_a^{3\kappa}|W|^{-\kappa}|\varphi|_{W,\kappa}\le \gamma_a^{3\kappa}|W|^{-\kappa},$$ where $ H^{\kappa}(\varphi)$ is the H\"older exponent of $\varphi$ (on $W$).
The sum (\ref{rrr}) is therefore bounded by
\begin{equation}\label{schi}
2\alpha^3\gamma_a^{3\kappa}|W|^{-\kappa}\sum_{j}^{M_3} \|h\|_{s} |W_j^{(n)}|^{\kappa}.
\end{equation}
By construction, all the intervals $|W_j^{(n)}|\le |W|\footnote{It is at this point where the partitioning we argued in remark \ref{nnnn} becomes useful.};$ by using the bound on the cardinality of such intervals given by $M_k,$ we finally get
$$
\|\mcl_n^k h\|_s\le \left[ \left(\frac{\alpha}{\gamma_a}\right)^k+1+k\right] |h|_w+ \|h\|_{s}  \left[2 (\alpha\gamma_a^{\kappa-1})^k+2\gamma_a^{k\kappa}(1+k)\right].
$$
\subsection{ Strong unstable norm}\label{ese}
In order to treat the strong  unstable norm, we follow section 4.3 in \cite{DL} adapted to our case, which is considerably much easier.
Therefore, take two stable manifolds $W_{1,2}$ at distance at most $\epsilon,$ and $\varphi_i$ on $W_i, i=1,2$ with $|\varphi_i|_{C^1(W_i)}\le 1.$
Call  $U_1\subset W_1$ and $U_2\subset W_2$ the connected intervals parametrized respectively by $(s_{W_1}, t), (s_{W_2}, t),$ with  $t$ belonging to the same interval. We call {\em matched} these two pieces. We call $V_{1,2}$  the two {\em unmatched} pieces in $W_{1,2};$ notice that the length of these two pieces is less than $\epsilon.$ Define now by  $U^{(j)}_{1, k}, U^{(j)}_{2,k}, j=1, \dots 2^k$ two preimages of order $k$ respectively of $U_1$ and $U_2$ with the same history, which means that if $s_{U^{(j)}_{1,k}}, s_{U^{(j)}_{2,k}}$ are the common ordinates of the points in respectively $U^{(j)}_{1,k}$ and $U^{(j)}_{2,k},$ then $s_{U^{(j)}_{1,k}}$ and $s_{U^{(j)}_{2,k}}$ belong to the same inverse branch of the map $T^k_Y$ given in (\ref{rty}).  Due to the linearity of the map, the sets $U^{(j)}_{1,k}$ and $U^{(j)}_{2,k}$ will be again matched and
$d(U^{(j)}_{1,k}, U^{(j)}_{2,k})=|s_{U^{(j)}_{1,k}}-s_{U^{(j)}_{2,k}}|\le \alpha^k d(U_1, U_2)\le \alpha^k \epsilon.$
Since $U^{(j)}_{1,k}$ and $U^{(j)}_{2,k}$ could contain each  at most $k$ preimages of the ball $B_n,$ we could have at most $2k$ matched intervals inside $U^{(j)}_{1,k}$ and $U^{(j)}_{2,k}$. Call $U^{(j, l)}_{1,k}$ and $U^{(j, l)}_{2,k}, l=1,\dots, 2k$  those smaller matched pieces.


The points of $U_{1,k}^{(j,l)}$ (resp. of $U_{2,k}^{(j,l)}$), will be parametrized as $(s_{1,k}^{(j,l)},t), t\in U_{1,k}^{(j,l)}$ (resp. $(s_{2,t}^{(j,l)}, t\in U_{2,k}^{(j,l)}$))\footnote{With abuse of notation, $U_{1,k}^{(j,l)}$ and $U_{2,k}^{(j,l)}$ denote the segments of stable manifolds, where $s_{1,t}^{(j,l)}$ (resp. $s_{2,t}^{(j,l)}$) is the common ordinate of the points in $U_{1,k}^{(j,l)}$ (resp. in $U_{2,k}^{(j,l)}$)}. We have to control pieces of the type
\begin{equation}\label{pio}
\frac{1}{\eps^{\beta}}\left( \int_{U_{1,k}^{(j,l)}} h(s_{1,k},t)\varphi_1(T^k(s_{1,k},t))dt- \int_{U_{2,k}^{(j,l)}} h(s_{2,k},t)\varphi_2(T^k(s_{2,k},t))dt
 \right).
\end{equation}
we now introduce the auxiliary function
$$
\overline{\varphi}_2(s_{W_2},t)=\varphi_1(s_{W_1},t), t\in U_1\footnote{Same convention for $U_1$ as in the previous footnote.}.
$$
Then we bound  (\ref{pio}) as
$$
\frac{1}{\eps^{\beta}}\left|\int_{U_{1,k}^{(j,l)}} h(s_{1,k}^{(j,l)},t)\varphi_1(T^k(s_{1,k}^{(j,l)},t))dt- \int_{U_{2,k}^{(j,l)}} h(s_{2,k}^{(j,l)},t)\overline{\varphi}_2(T^k(s_{2,k}^{(j,l)},t))dt
 \right|+
$$
$$
\frac{1}{\eps^{\beta}}\left|\int_{U_{1,k}^{(j,l)}} h(s_{2,k}^{(j,l)},t)[\overline{\varphi}_2(T^k(s_{2,k}^{(j,l)},t))dt- \varphi_2(T^k(s_{2,k}^{(j,l)},t))]dt
 \right|= (I)+(II)
$$
We  begin to treat the first piece (I); notice that $T^k(s_{2,k}^{(j,l)},t)$ is a point of the form $(s_{W_2}, t^*), t^*\in U_2$ and therefore $\overline{\varphi}_2(T^k(s_{2,k}^{(j,l)},t))=\overline{\varphi}_2(s_{W_2}, t^*)=\varphi_1(s_{W_1}, t^*)=\varphi_1(T^k(s_{1,k}^{(j,l)},t)),$ since the points $(s_{1,k}^{(j,l)}, t)$ and $(s_{2,k}^{(j,l)}, t)$ are aligned on the same vertical line. Notice now that $|\varphi_1\circ T^k|_{C^1(U_{1,k})}\le 1,$ see a similar computation done for the strong stable norm, and moreover $d_q(\varphi_1\circ T^k, \varphi_2\circ T^k)=0.$ We also have $d(U_{1,k}^{(j,l)}, U_{2,k}^{(j,l)})\le \alpha^{k}\eps<\eps,$ which finally implies
$(I)\le \alpha^{k\beta}\|h\|_u.$ We now pass to estimate (II) using the strong stable norm as
$$
(II)\le \frac{1}{\eps^{\beta}}\|h\|_s|U_{1,k}^{(j,l)}|^{\kappa}|\varphi_2\circ T^k-\varphi_1\circ T^k|_{C^{\kappa}(U_{1,k})}.
$$
We now have, using estimates as above
$$
|\varphi_2\circ T^k-\varphi_1\circ T^k|_{C^{\kappa}(U_{1,k})}=|\varphi_2\circ T^k-\varphi_1\circ T^k|_{C^0(U_{1,k})}+
$$
$$
\sup_{y_1,y_2\in U_{1,k}^{(j,l)},x\neq y}\frac{|\varphi_2\circ T^k(y_1)-\varphi_1\circ T^k(y_1)-\varphi_2\circ T^k(y_2)+\varphi_1\circ T^k(y_2)|}{|y_1-y_2|^{\kappa}}\le
$$
$$
|\varphi_2-\varphi_1|_{C^0(U_1)}+\gamma_a^{k\kappa} H^{\kappa}(\varphi_1-\varphi_2)=d_{\kappa}(\varphi_1, \varphi_2)\le \eps.,
$$
where $H^{\kappa}$ is computed on $U_1.$ Therefore
$$
(II)\le \eps^{1-\beta}\gamma_a^{-k\kappa}\|h\|_s.
$$



For the unmatched pieces, we have to take into account those generated by the $2^k$ preimages of $V_{1,2},$ but also the unmatched pieces in the $U^{(j)}_{m,k}, m=1,2, j=1,\dots, 2^k.$

Let us consider first the pieces generated by the $2^k$ preimages of $V_{1,2};$ their total number is at most $2k2^k.$ If we call $V_{(k)}$ one of them and supposing it belongs to the backward images of
$W_1$,  we must estimate  the strong stable norm of the quantity
 $\frac{1}{\epsilon^{\beta}}\left|\int_{V_{(k)}}h(y)\varphi(T^ky)dm(y)\right|.$ We multiply it by  $|V_{(k)}|^{\kappa}|\varphi\circ T^k|_{C^{\kappa}(V_k)}$. But $|\varphi\circ T^k|_{C^{\kappa}(V_{(k)})}\le |\varphi|_{C^0(W_1)}+H^{\kappa}(\varphi)\gamma_a^{k\kappa}\le 1,$ and $|V_{(k)}|^{\kappa}\le \epsilon^{\kappa}\gamma_a^{-k\kappa}.$ We now consider the unmatched pieces into the $U_{m,k}^{(j)}, m=1,2.$ These are generated by the intersections of the preimages of the ball $B_n$ with the preimages of $W_{1,2}.$ These intersections could affect one or both of the stable segments $U^{(j)}_{m, k}.$ As soon as an intersection occurs, it could generate at most three unmatched pieces (the intersection itself and two short segments on both sides of the intersection); therefore we could have at most $6k$ unmatched pieces for the couple  $U^{(j)}_{m, k}.$ About their size, we use the convexity argument given in section 6.3 in \cite{DL}. If an intersection occurs with one or both the  $U^{(j)}_{m, k},$ it also happens between the ball $B_n$ and some backward iterate of order $l\le k$ of the couple $W_{1,2}.$ In this case the intersection will be of order $\sqrt{\epsilon}$, namely $C_{B_{n}}\sqrt{\epsilon}$ (our $B_n$ is a real ball), where the constant $C_{B_{n}}<1$ depends on the radius of $B_n,$  and therefore the backward intersections with $U^{(j)}_{m, k}$ will be of order $(\sqrt{\epsilon} \gamma_a^{-1})^{k}.$
Putting together these contribution and asking for
$$
\kappa>2\beta
$$
we have, since $\alpha^k=2^{-k}:$
 $$
 \| \mcl_n^k h \|_u\le2k\alpha^{k\beta}\|h\|_u+12k \gamma_a^{-k\kappa}\|h\|_s
 $$

In conclusion we get for $k\ge 1:$
\begin{equation}
\| \mcl_n^k h \|=\| \mcl_n^k h \|_s+b\| \mcl_n^k h \|_u\le  
\end{equation}
\begin{equation}\label{tre}
\left[ \left(\frac{\alpha}{\gamma_a}\right)^k+1+k\right]|h|_w+\left[2 (\alpha\gamma_a^{\kappa-1})^k+2\gamma_a^{k\kappa}(1+k)\right]\|h\|_s+b\left(2k\alpha^{k\beta}\|h\|_u+12k \gamma_a^{-k\kappa}\|h\|_s\right).
\end{equation}
We now put $g_k:=\left[ \left(\frac{\alpha}{\gamma_a}\right)^k+1+k\right]$ and $u_{\kappa}:=\alpha\gamma_a^{\kappa-1}.$ Then we ask that $u_{\kappa}<1$ which needs
$$
\kappa>1-\frac{\log \alpha}{\log \gamma_a}.
$$
Then can rewrite  (\ref{tre}) as
\begin{equation}\label{quatro}
\| \mcl_n^k h \|\le g_k|h|_w+[2(2+k)u^k_{\kappa}+12bk\gamma_a^{-k\kappa}]\|h\|_s+2bk\alpha^{k\beta}\|h\|_u.
\end{equation}
Then choose $b$ such that\footnote{We will see in a moment that this choice will be done for a particular $k.$}
$$
b<\frac{(2+k)u^k_{\kappa}}{k\gamma_a^{-k\kappa}},
$$
which allow us to rewrite (\ref{quatro}) as
$$
\| \mcl_n^k h \|\le g_k|h|_w+4(2+k)u^k_{\kappa}\|h\|_s+2bk\alpha^{k\beta}\|h\|_u.
$$
Then pose
$$
\sigma=\max\left(u_{\kappa}, \alpha^{\beta}\right)<1,
$$
which gives
$$
\| \mcl_n^k h \|\le g_k|h|_w+r_k\sigma^k\|h\|,
$$
where we set $r_k=4(2+k).$\\
We now fix a value of $k,$ say $k_0,$ such that
$$
\rho:=(r_{k_0}\sigma^{k_0})^{\frac{1}{k_0}}<1
$$
and we replace $k$ with $k_0$ in the bound above for $b.$ With these positions and by using
blocks of length $k_0,$ it is immediate to rewrite (\ref{tre}) as, for any $k>0:$
$$
\| \mcl_n^k h \|\le \rho^k|h|_w+M^k\|h\|,
$$
where
$M:=g_{k_0}^{\frac{1}k_0}(1-r_{k_0}\sigma^{k_0})^{-1},$ and this proves (\ref{LLYY}).\\
\begin{remark}\label{constant}
   We summarize the bounds we imposed on the relevant quantities we used up to now; we have, since $\alpha=1/2:$
   \begin{itemize}
    \item $0<\beta<1-\kappa$  and $2\beta<\kappa.$  This requires first  $\beta<1/3.$
    \item $\beta+q<1,$ with $q\in (0,1),$ and $q>5/6,$ which  implies $\beta<1/6.$
    \item $\kappa>1-\frac{\log \alpha}{\log \gamma_a}=1+\frac{\log 2}{\log \gamma_a}.$
    \item Finally we will see below that $\kappa>\frac{\alpha\log \alpha^{-1}+(1-\alpha)\log (1-\alpha)^{-1}}{\log \gamma_a^{-1}}=-\frac{\log 2}{\log \gamma_a.}$ \\ This is  verified by several couples of the parameters $\kappa, \gamma_a.$ For instance, for any $1/2<\kappa<1,$ we could take $\gamma_a=1/4.$ Alternatively, by choosing $\kappa=3/4,$ we could take $2^{-4}<\gamma_a<2^{-4/3}.$
   \end{itemize}
\end{remark}
{\bf Warning: from now on we will consider the baker map (\ref{bibi}) with the parameters $\alpha=v=0.5$ and $\gamma_a$ satisfying the constraints given in the previous remark \ref{constant}.}



\section{Proof of {\bf A4}}
We now pass to  justify {\bf A4.} We remind that $Z$  is the unique solution of the eigenvalue equation
 $\mcl^*Z=Z,$ where $\mcl^*$ is the dual of the transfer operator. By setting
\begin{equation}\label{DUAL}
Z(h):=h(1), \ h\in \mathcal B,
\end{equation}
we  have for $h\in \mathcal B$:
$$
\mcl^*Z(h)=Z(\mcl h)= (\mcl h)(1)= h(1\circ T)=h(1)=Z(h).
$$
Coming back to $\Delta_{n}$ we see immediately that
\begin{equation}\label{DELTA2}
\Delta_{n}=Z(\mcl({\bf 1}_{B_{n}}\mu))=\mcl({\bf 1}_{B_{n}}\mu)(1)=\int {\bf 1}_{B_{n}}\,d\mu=\mu(B_{n}).
\end{equation}
The term $||\mcl({\bf 1}_{B_{n}}\mu)||$ can be handled very easily using the Lasota-Yorke inequality which we proved in item {\bf A2}  above. It follows in fact from (\ref{LLYY}) that there are two constants $C_1, C_2$  depending only on the map such that
$$
||\mcl({\bf 1}_{B_{n}}\mu)||\le C_1 ||{\bf 1}_{B_{n}}\mu||+C_2|{\bf 1}_{B_{n}}\mu|_w.
$$
\begin{lemma}\label{bcv}
There exists two constants $\hat{C}_1, \hat{C}_2$ independent of $n$ such that 
\begin{equation} \label{vb}
 ||{\bf 1}_{B_{n}}\mu||\le \hat{C_1}||\mu||\;\;\; \text{and} \;\;\; |{\bf 1}_{B_{n}}\mu|_w\le  \hat{C_2}|\mu|_w.
 \end{equation}
 \end{lemma}
 \begin{proof} 
 The proof follows closely the  arguments given in section \ref{subsub2} to bound the quantity ${\bf 1}_{B^{(k)}}h,$ but we should now be more careful in getting constants which do not depend about upon $B_n.$ Notice that, contrarily to section \ref{subsub2}, we take here the intersection of $W$  with the ball $B_n,$ not with its preimages. With this in mind, it is immediate  to check that  $||{\bf 1}_{B_{n}}\mu||_s\le ||\mu||_s\;\; \text{and} \;\; |{\bf 1}_{B_{n}}\mu|_w\le  |\mu|_w.$ It remains to compute the strong unstable norm and this reduces to bound the difference, for a smooth $h$: $\textfrak{D}_n:=\frac{1}{\epsilon^{\beta}}\textfrak{I}_n,$ where $\textfrak{I}_n:=\left|
 \int_{W_1\cap B_n}h\varphi_1 dm- \int_{W_2\cap B_n}h\varphi_2 dm\right|,$ and $W_1 W_2, \varphi_1, \varphi_2$ verify the constraints given in (\ref{sunorm}). We split the argument in two parts. We call $r_n$ the radius of the ball $B_n$ and  we begin to take $\epsilon\ge 0.5 r_n.$ Then we have the rough bound 
 $$
 \textfrak{I}_n\le 2\ r_n \ \|h\|_s \ \Rightarrow \  \textfrak{D}_n\le 2^{1+\kappa}\|h\|_s \epsilon^{\kappa-\beta}<2^{1+\kappa}\|h\|_s,
 $$
 since $\beta<\kappa.$
 Then we consider $\epsilon <0.5 r_n;$ we split the difference in  $\textfrak{I}_n$  over unmatched and matched pieces. There could be at most one matched piece inside $B_n$ giving the contribution $\|h\|_u.$ If the matched piece is inside $B_n$ there could be at most two unmatched pieces. They have length $\le\epsilon$ if they are on the extremities of the two stable manifolds inside $B_n$.  Otherwise they are generated when the two stable manifolds meet  the boundary of $B_n.$ It is a simple exercise to see that the sum of the  lengths of those unmatched pieces is bounded by the maximum difference of the lengths of two horizontal chords whose vertical distance is $\epsilon$ and that value is less or equal to $2\sqrt{2r_n\epsilon}\le 2\sqrt{2\epsilon}.$ 
 
  Finally there could be an unmatched piece when  only one manifold crosses $B_n$ and for the same argument as above its length is bounded by $2\sqrt{2\epsilon}.$ Summing all those contribution when $\epsilon<0.5 r_n$ we get
  $$
   \textfrak{I}_n\le \epsilon^{\beta}\|h\|_u+2\epsilon^{\kappa} \|h\|_s+2^{1+\frac{\kappa}{2}}\epsilon^{\frac{\kappa}{2}}\|h\|_s \Rightarrow \  \textfrak{D}_n\le \|h\|_u+2\|h\|_s+2^{1+\frac{\kappa}{2}}\|h\|_s,
  $$
  since $2\beta<\kappa.$ 
 \end{proof}
 
By setting
$$
C_3:= C_1 \hat{C}_1||\mu||+C_2\hat{C}_2|\mu|_w,
$$
we are led to prove that (see (\ref{R2})), $\eta_{n}C_3\le \text{const}\ \Delta_{n}$, namely
\begin{equation}\label{R3}
\eta_{n}\le \ \text{const}\ \Delta_{n}= \text{const} \ \mu(B_{n}).
\end{equation}
Before continuing, we have to focus on $\mu(B_n)=\mu(B(z, e^{-u_n})).$ It is well known that for $\mu$-almost all $z$ and by taking the radius sufficiently small, depending on the value $\iota,$
$e^{-u_n (d+\iota)}\le \mu(B(z, e^{-u_n})\le e^{-u_n (d-\iota)},$ where $\iota>0$ is arbitrarily small. This follows from the existence of the limit
\begin{equation}\label{ED}
\lim_{r\rightarrow 0^+} \frac{\log \mu(B(x,r))}{\log r}=d, \ \text{for} \ x \ \text{chosen $\mu$-a.e.},
\end{equation}
and quantity $d$ is the Hausdorff dimension of the measure $\mu$ which  in our case  reads \cite{EO}, eq. (3.24):
$$
d=1+d_s, \ \text{where} \ d_s:=\frac{\alpha\log \alpha^{-1}+(1-\alpha)\log (1-\alpha)^{-1}}{\log \gamma_a^{-1}}.
$$

We now have:
\begin{lemma}\label{ldc}
Assume
$\kappa>d_s$.

Then
$$
\eta_n\le 2 \lceil{\gamma_a^{-1}}\rceil \mu(B_n).
$$
\end{lemma}
\begin{proof}
By density it will be enough to prove the Lemma for $h\in C^1(X).$ We have
$$
Z(\mcl(h\ {\bf 1}_{B_{n}}))=\int h\ {\bf 1}_{B_{n}} dm.
$$
 By disintegrating along the stable partition $\mathcal{W}^s$ we get:
\begin{equation}\label{vb}
\int h\ {\bf 1}_{B_{n}} \,dm_L
=\int_{\xi}d\lambda(\xi)\left[\int_{W_{\xi}}({\bf 1}_{B_{n}} h)(x)\,dm(x)\right].
\end{equation}
We now cut the stable manifold $W_{\xi}$ in pieces of length $\gamma_a$ in order to compute the strong stable norm on each of them and we put $|\tilde{W}_{\xi}|$ the largest intersection of such pieces with $B_n.$  We immediately get
\begin{eqnarray*}
(\ref{vb})\le& \int_{\xi}d\lambda(\xi)\left[\lceil{\gamma_a^{-1}}\rceil|\tilde{W}_{\xi}|^{\kappa} \|h\|_s\right]
\le  e^{-u_n\kappa}\lceil{\gamma_a^{-1}}\rceil ||h||_s\lambda(\xi; B_{n}\cap W_{\xi}\neq \emptyset),
\end{eqnarray*}
where $\lambda$ is the quotient measure  on the space of stable leaves $ W_{\xi}$ belonging to $\mathcal{W}^s$; and indexed by $\xi$, see for instance \cite{viana}, Appendix A. By definition of disintegration we have that
$$
\lambda(\xi; B_{n}\cap W_{\xi}\neq \emptyset)=m_L(\bigcup W_{\xi},  B_{n}\cap W_{\xi}\neq \emptyset)=2e^{-u_n},
$$
 and therefore
$$
\eta_n\le 2 \lceil{\gamma_a^{-1}}\rceil e^{-u_n(\kappa+1)}.
$$

 We finally  have
$$
\eta_n\le 2 \lceil{\gamma_a^{-1}}\rceil e^{-u_n(\kappa+1)}\le 2\lceil{\gamma_a^{-1}}\rceil e^{-u_n (d+\iota)}\le 2\lceil{\gamma_a^{-1}}\rceil\mu(B_n),
$$
provided we choose
\begin{equation}\label{star}
\kappa>d+\iota-1
\end{equation}
which can be satisfied by assumption.
\end{proof}
\begin{remark}\label{RRR1}
The local comparison between the Lebesgue and the SRB measure of a ball of center $z$ obliged us to choose $z$ $\mu$-almost everywhere because in this way we have a precise value for  the locally constant dimension $d$. We are therefore discarding several points, possibly periodic, where the limiting distribution for the Gumbel law (see next section) could exhibit extremal indices different from $1.$
\end{remark}
\begin{remark}\label{RRR2}
 For invertible, piecewise differentiable hyperbolic maps   in dimension $2$, the construction of the Banach space imposes that $\kappa<1;$   for billiard maps associated with Lorentz gases, \cite{DZ1}, it even verifies $\kappa\le 1/6.$ This could make difficult to check condition (\ref{star}) for invariant sets with large $d,$ like   Anosov diffeomorphisms for instance. In some sense this difficulty was already raised in section 4.5 in the Keller's paper \cite{GK}, where an estimate like ours in terms of the H\"older exponent $\kappa$ was given and the subsequent question of the comparison with the SRB measure was addressed.
\end{remark}
\section{The limiting law}
\subsection{Gumbel law}\label{fc}
We have now all the tools to compute the asymptotic behavior  of $\mcl_n.$ We need one more ingredient which will constitute our last assumption:
\begin{itemize}
\item {\bf A5} Let us suppose that the following limit exist for any $k\ge 0:$
\begin{equation}\label{QQ}
q_{k}=\lim_{n\rightarrow \infty}q_{k,n}:=\lim_{n\rightarrow \infty} \frac{Z\left([(\mcl-\mcl_n)\mcl_n^k(\mcl-\mcl_n)]\mu\right)}{\Delta_n}
\end{equation}
\end{itemize}
Notice that
$$
q_{k,n}=\frac{\mu(B_n\cap T^{-1}B_n^c\cap \cdots \cap T^{-k}B_n^c\cap T^{-(k+1)}B_n)}{\mu(B_n)}
$$
and therefore by the Poincar\'e recurrence theorem
$$
\sum_{k=0}^{\infty}q_{k,n}=1.
$$
Therefore if the limits (\ref{QQ}) exist, the quantity
\begin{equation}\label{EEII}
\theta=1-\sum_{k=0}^{\infty}q_k,
\end{equation}
is well defined and verifies
$$
0\le \theta\le 1.
$$
It is   called the {\em extremal index} and it   modulates the exponent of the Gumbel law as we will see in a moment. We have in fact by Theorem 2.1 of \cite{KL}:
$$
\lambda_n=1-\theta\Delta_n=\exp(-\theta\Delta_n+o(\Delta_n)),
$$
or equivalently
$$
\lambda_n^n=\exp(-\theta n\Delta_n+no(\Delta_n)).
$$
Therefore we have
$$
\mu(M_n\le u_n)=\mcl_n^n\mu(1)=\lambda_n^n[\mu_n(1)Z_n(\mu)+Q^n_n(\mu)(1)]
$$
and consequently
$$
\mu(M_n\le u_n)=\exp(-\theta n\Delta_n+no(\Delta_n))[O(1)+ Q^n_n(\mu)(1)],
$$
since  $\mu_n(1)=1$ and
it has been proved in \cite{KL}, Lemma 6.1, $Z_n(\mu)\rightarrow 1$ for $n\rightarrow \infty.$ At this point we need an important assumption, which basically reduces to fixing the sequence $u_n$ and allow us to get a non-degenerate limit for the distribution of $M_n$. We in fact ask that
\begin{equation}\label{LLSS}
n\ \Delta_n\rightarrow \tau, \ n\rightarrow \infty,
\end{equation}
where $\tau$ is a positive real number. With this assumption, using (\ref{pif}) and (\ref{ggg}), we have
$$
|Q_n^n(\mu)(1)|\le \text{const}\  sp(Q)^n ||\mu||\rightarrow 0.
$$
In conclusion we get the Gumbel  law
$$
\lim_{n\rightarrow \infty}\mu(M_n\le u_n)=e^{-\theta \tau}.
$$

\subsection{The extremal index}\label{43}
We are now ready to compute the $q_{k,n},$ which will determine the extremal index. Let us first suppose that the center of the ball $B_n$ is not a periodic point; then the points $T^j(z), j=1,\cdots, k$ will be disjoint from $z.$ Let us take the ball so small that is does not cross the set $T^{_j}\Gamma, j=1,\cdots,k,$ where $\Gamma$ is the discontinuity line $(y=\alpha).$ In this way the images of $B_n$  will be ellipses with the long axis along the unstable manifold and the short axis stretched by a factor $\gamma^k.$ By continuity and taking $n$ large enough, we can manage that all the iterates of $B_n$ up to $T^k$ will be disjoint from $B_n$ and for such $n$ the numerator of $q_{k,n}$ will be zero.
At this point we can state the following result:

\begin{proposition}\label{ty1}
Let $T$ be the baker's transformation and consider the function $M_n (x):=\max\{\Xi(x), \dots, \Xi(T^{n-1}x)\},$ where $\Xi(x)=-\log d(x,z),$ and $z$ is chosen $\mu$-almost everywhere with respect to the SRB measure $\mu.$ Then, if $z$ is not periodic, we have
$$
\lim_{n\rightarrow \infty}\mu(M_n\le u_n)=e^{-\tau},
$$
where the boundary level $u_n$ is chosen to satisfy $n\mu(B(z, e^{-u_n}))\rightarrow \tau,$ for some positive $\tau.$
\end{proposition}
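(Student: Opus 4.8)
The plan is to run the problem through the perturbative machinery set up above: once \textbf{A1}--\textbf{A5} are in force, the Keller--Liverani theorem gives $\lambda_n=1-\theta\Delta_n+o(\Delta_n)$ with $\theta=1-\sum_{k\ge0}q_k$, and then, exactly as in the derivation of the Gumbel law above, one gets $\mu(M_n\le u_n)=\lambda_n^n\mu_n(1)+\lambda_n^nQ_n(\mu)(1)\to e^{-\theta\tau}$, using $\mu_n(1)\to1$, $n\Delta_n\to\tau$ and $sp(Q)<1$; here $\Delta_n=\mu(B_n)$, and a calibrating level $u_n$ with $n\mu(B(z,e^{-u_n}))\to\tau$ exists because $r\mapsto\mu(B(z,r))$ increases from $0$. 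Of the assumptions, \textbf{A1}--\textbf{A3} are structural facts about $\mcl$ and $\mcl_n$, \textbf{A2} was checked above, and \textbf{A4} is the preceding Lemma (its dimension estimate being valid because $z$ is $\mu$-generic). So everything comes down to a single claim: \emph{for $z$ non-periodic, each limit $q_k=\lim_n q_{k,n}$ exists and equals $0$}; proving this simultaneously verifies \textbf{A5} and forces $\theta=1$, whence $e^{-\theta\tau}=e^{-\tau}$.

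To establish the claim I would fix $k\ge0$ and start from
$$
q_{k,n}=\frac{\mu\big(B_n\cap T^{-1}\bn\cap\cdots\cap T^{-k}\bn\cap T^{-(k+1)}B_n\big)}{\mu(B_n)}\ \le\ \frac{\mu\big(B_n\cap T^{-(k+1)}B_n\big)}{\mu(B_n)},
$$
so it suffices to show $B_n\cap T^{-(k+1)}B_n=\emptyset$ for all large $n$. Next I would use that the singularity set of the forward iterates, $\bigcup_{j\ge0}T^{-j}\Ga$, is a countable union of horizontal lines, hence Lebesgue- and $\mu$-null; so for $\mu$-a.e.\ $z$ we have $z\notin\bigcup_{j=0}^{k}T^{-j}\Ga$, and therefore for $n$ large $B_n=B(z,e^{-u_n})$ is disjoint from $\bigcup_{j=0}^{k}T^{-j}\Ga$. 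On such a ball $T^{k+1}$ is a single affine branch, contracting by $\ga^{k+1}$ in the horizontal (stable) direction and expanding by $\al^{-(k+1)}$ in the vertical (unstable) one, so
$$
\operatorname{diam}\big(T^{k+1}B_n\big)\ \le\ 2\,\al^{-(k+1)}e^{-u_n}\ \xrightarrow[n\to\infty]{}\ 0,
$$
and $T^{k+1}B_n$ concentrates on the point $T^{k+1}z$.

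Then I would set $\del:=d(z,T^{k+1}z)$, which is positive precisely because $z$ is not periodic, and note that if some $w$ belonged to $B_n\cap T^{k+1}B_n$, then, since $z$ is the centre of $B_n$ and $T^{k+1}z\in T^{k+1}B_n$,
$$
\del\ \le\ d(z,w)+d(w,T^{k+1}z)\ \le\ \operatorname{diam}(B_n)+\operatorname{diam}(T^{k+1}B_n)\ =\ 2e^{-u_n}+\operatorname{diam}(T^{k+1}B_n),
$$
and the right-hand side tends to $0$; hence for $n$ large $B_n\cap T^{k+1}B_n=\emptyset$, which forces $B_n\cap T^{-(k+1)}B_n=\emptyset$ (any point of the latter is carried into the former by $T^{k+1}$, which is well defined on all of $B_n$). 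Thus $q_{k,n}=0$ for $n$ large, so $q_k=0$; as $k$ was arbitrary, $\theta=1-\sum_{k\ge0}q_k=1$, and the proposition follows.

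The hard part will be the geometric bookkeeping of the forward images $T^{k+1}B_n$ against the discontinuity set: I must be sure that, for fixed $k$ and $n$ large, $B_n$ sits strictly inside a single smoothness domain of $T^{k+1}$, so that $T^{k+1}B_n$ is one connected ellipse-like region concentrating on $T^{k+1}z$ rather than a scatter of fragments blown up by the factor $\al^{-(k+1)}$. This is exactly where both hypotheses on $z$ are spent: taking $z$ $\mu$-almost everywhere keeps it off the null set $\bigcup_j T^{-j}\Ga$ (and also supplies the local scaling $\mu(B(z,r))\approx r^d$ used in \textbf{A4} and in the calibration of $u_n$), while non-periodicity keeps $z$ off its own forward orbit. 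Were $z$ periodic of period $p$, then $T^pB_n$ would swallow a genuine neighbourhood of $z$ itself, $q_{p-1,n}$ would not vanish, and $\theta$ would drop below $1$ --- the more interesting case, treated separately.
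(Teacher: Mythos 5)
Your proposal is correct and follows essentially the same route as the paper: show that for non-periodic $z$ and each fixed $k$ the ball $B_n$ eventually avoids the discontinuity lines $T^{-j}\Gamma$, so its forward images are single small ellipse-like sets concentrating on $T^{j}z\neq z$, forcing $q_{k,n}=0$ for large $n$, hence $q_k=0$, $\theta=1$, and the Gumbel limit $e^{-\tau}$ via the Keller--Liverani expansion already established. Your write-up merely makes explicit some steps the paper leaves implicit (the bound $q_{k,n}\le\mu(B_n\cap T^{-(k+1)}B_n)/\mu(B_n)$, the $\mu$-nullity of the singularity set, and the triangle-inequality disjointness argument), which is the same argument in more careful form.
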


Suppose now $z$ is a periodic point of minimal period $p;$ of course the next considerations make sense if the limit (\ref{ED}) exists. By doing  as above we could stay away from the discontinuity lines up to $p$ iterates and look simply to $T^{-p}(B_n)\cap B_n.$ Since the map acts linearly, the  $p$ preimage of $B_n$ would be an ellipse with center $z$  and symmetric w.r.t.\ the unstable manifold passing trough $z.$ So we have to compute the SRB measure of the intersection of the ellipse with the ball shown in Fig.~2.

It turns out that this computation is not easy. The natural idea would be  to disintegrate
 the SRB measure along the unstable manifolds  belonging to the unstable partition
  $\mathcal{W}^u$. We index such fibers as $W_{\nu}$ and we put $\zeta(\nu)$
   the associated quotient measure. Let us recall that the conditional measures
    along leaves $W_{\nu}$ are normalized Lebesgue measures: we denote them with $l_{\nu}.$ If we call $\mathcal{E}_{in}$ the region of the ellipse inside the ball $B_n$, we have to compute
    \begin{equation}\label{cdc}
    \frac{\int l_{\nu}(\mathcal{E}_{in}\cap W_{\nu})\,d\zeta(\nu)}{\int l_{\nu}(B_n\cap W_{\nu})\,d\zeta(\nu)}.
    \end{equation}
    Although simple geometry allows us to compute easily the length of $\mathcal{E}_{in}\cap W_{\nu}$ and $B_n\cap W_{\nu},$ and since they vary with $W_{\nu},$ it is not at the end clear how to perform the integral with respect to the quotient  measure, especially because we need  asymptotic estimates, not bounds. We therefore proceed  by introducing a different metric, a nice trick which was already used in \cite{NF}. We use the $l^{\infty}$ norm on $\mathbb{R}^2$ for which  $|(x,y)|_{\infty}=\max\{|x|, |y|\}.$ In this way the ball $B_n$ will become a square with sides of length $r_n:=e^{-u_n}$ and $T^{-p}(B_n)$ will be a rectangle with  the long side of length $\gamma_a^{-p}r_n$ and the short side of length $\alpha^p r_n.$ This rectangle will be placed symmetrically with  respect to the square as indicated in Fig. 3.   The ratio (\ref{cdc}) can now be computed easily since the length in the integrals are constant and we get $\alpha^p.$ We will see that the value computed in this way is the right one, see Proposition \ref{ty2}, but in principle we {\em cannot apply the spectral technique} since the geometric shape of the rectangles does not allow to show that the characteristic function of such rectangles is in $\mathcal{B}$ and also it does not fit the convexity requirement which we used to control the unmatched pieces in the strong unstable norm. We will introduce in section \ref{eds} below a different technique which will allow us to get the extremal index even when the target sets are rectangles.


\begin{figure}\begin{tikzpicture}[scale=3]
\draw[fill=blue!30] (0,0) circle (0.5cm);
\draw[fill=green] (0,0) ellipse (0.80cm and 0.25cm);
\draw[dashed] (0,0) circle (0.5cm);
\draw[->] (0,-1.0) -- (0,1.0);
\end{tikzpicture}
\caption{Computation of the extremal index around periodic point with the Euclidean metric. The vertical line is an unstable manifold. We should compute the green area inside the circle. }
\label{tik}
\end{figure}
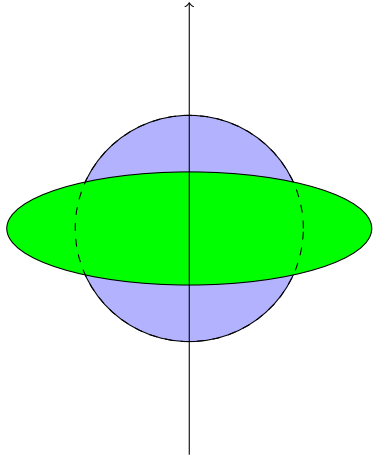

\begin{figure}\begin{tikzpicture}[scale=3]
\draw[fill=blue!30] (0,0) rectangle (1.2cm,1.2cm);
\draw[fill=green] (-0.3cm,0.3cm) rectangle (1.5cm,0.9cm);
\draw[dashed] (0,0) rectangle (1.2cm,1.2cm);
\end{tikzpicture}
\caption{Computation of the extremal index around periodic point with the $l^{\infty}$ metric.  We should compute the green area inside the square. }
\label{tik}
\end{figure}
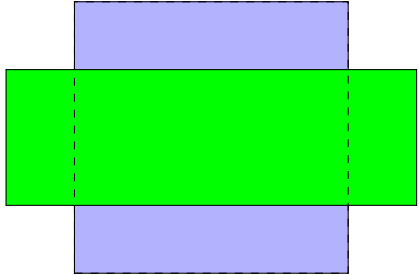

\section{Poisson statistics}\label{PPP}
\subsection{The spectral approach} As mentioned in the introduction, the spectral technique has been recently generalised  to study the statistics of the number of visits in balls shrinking around a point \cite{compound}. We briefly introduce such an approach and the reader will see that we can easily adapt it to the baker's map. The starting point is to consider   the following counting function
\begin{equation*}
N^{\tau}_{B_n}(x)=\sum_{i=0}^{\floor{\tau/\mu(B_n)}} {{\bf 1}_{B_n}} \circ T^i(x),
\end{equation*}
 where $\tau$ is a positive
parameter and
$x\in X.$ The goal is to study the distribution of this discrete random variable in the limit $n\rightarrow \infty;$ with the spectral approach we will rather look at the characteristic function of such a variable.\\ We begin to define $S_{n,k}:=\sum_{i=0}^{k} {{\bf 1}_{B_n}} \circ T^i$ and put $S_{n,(\tau,n)}:=N^{\tau}_{B_n}.$ We then define the perturbed operator
$$
\mcl_{n,s}(h)=\mcl(e^{is{\bf 1}_{B_n}}h), \ s\in \mathbb{R}, \ h\in \mathcal{B}.
$$
A simple computation shows that
$$
\mcl_{n,s}^k(\mu)(1)=\int e^{isS_{n,k}}d\mu,
$$
which suggests to get information on the characteristic function of $S_{n,k}$ by the behavior of the top eigenvalue $\lambda_{n,s}$ of the perturbed operator $\mcl_{n,s}.$ At this point the analysis proceeds in the same manner as for  the perturbed operator $\mcl_n$ and we sketch here the main steps. The {\em difference} between the two operators is now quantified by
$$
\Delta_{n,s}:=Z(\mcl-\mcl_{n,s})(\mu)=(1-e^{is})\mu(B_n),
$$
and
\begin{equation}\label{eee}
\lambda_{n,s}=1-\tilde{\theta}(s)(1-e^{is})\mu(B_n)+o(\mu(B_n)).
\end{equation}
The quantity $\tilde{\theta}(s)$ plays the  role of the extremal index  and is defined according to the formula (\ref{QQ}), which in the present case reduces to $\tilde{\theta}(s)=1-\sum_{k=0}^{\infty}q_k(s),$ where
\begin{eqnarray}
q_k(s) =\lim_{n\rightarrow \infty}\frac{1}{1-e^{is}}\sum_{\ell=0}^k(1-e^{is})^2e^{i\ell s}\beta_n^{(k)}(\ell)
=(1-e^{is})\sum_{\ell=0}^ke^{i\ell s}\beta_k(\ell),\\\label{sh}
\beta_n^{(k)}(\ell):=\frac{\mu(x; x\in B_n, T^{k+1}(x)\in B_n, \sum_{j=1}^k {\bf 1}_{B_n}(T^jx)=\ell)}{\mu(B_n)}.
\end{eqnarray}
and we  suppose that the limit
$
\beta_k(\ell):=\lim_{n\rightarrow \infty}\beta_n^{(k)}(\ell)
$
exists. Then we have
$$
\tilde{\theta}(s)=1-(1-e^{is})\sum_{k=0}^{\infty}\sum_{\ell=0}^ke^{i\ell s}\beta_k(\ell),
$$
and the exponential decay of correlation of the measure $\mu$ allows us to show that the series $\sum_{k=0}^{\infty}\sum_{\ell=0}^k\beta_k(\ell)$ converges absolutely\footnote{See section 3 in \cite{compound} for the proof of this convergence which applies to our case as well.} and therefore $\tilde{\theta}(s)$ is $C^{\infty}$ in the neighborhood of $0.$ If now return to the eigenvalue  (\ref{eee}), we exponentiate it to the power $n$ and we use again the threshold condition (\ref{LLSS}), $n\mu(B_n)\rightarrow \tau,$ we finally get
$$
\lim_{n\rightarrow \infty}\int e^{is S_{n,(\tau,n)} }d\mu=e^{-\tilde{\theta}(s)(1-e^{is})\tau}:=\Sigma(s).
$$
Since $\Sigma(s)$ is continuous in $s=0,$ it is the characteristic function of some random variable $\mathfrak{Z},$ possibly defined on a different probability space $(\Omega, \mathcal{F}, \mathbb{P})$. The variable $\mathfrak{Z}$ is clearly non-negative and integer valued and it is also infinitely divisible since $e^{-\tilde{\theta} (s) (1-e^{is}) \tau}=(e^{-\tilde{\theta}(s) (1-e^{is}) \tau/m})^m,$ for any $m.$ This implies that $\mathfrak{Z}$ has a compound Poisson (CP) distribution, see \cite{feller} or \cite{compound} for more references, namely it  may be written as $
\mathfrak{Z}:=\sum_{j=1}^N X_j$,
where the $X_j$ are  i.i.d. random variables  defined on  same probability space, and $N$ is Poisson distributed with {\em intensity} $\varkappa$ and $X_j$ has distribution $\mathbb{P}(X_j=l)=\rho_l;$ moreover $N$ is independent of all $X_j.$ We call the sequence $(\rho)_{l\ge 1}$ the {\em cluster size distribution} of $Z.$ Among the CP distributions, two are particularly important, the standard Poisson distribution and the P\`olya-Aeppli distribution. For the standard Poisson $\rho_1=1;$ for P\`olya-Aeppli the distribution of $X_j$ is geometrical, namely $\rho_l=\eta(1-\eta)^l, \eta\in (0,1).$ For such  distributions the associated characteristic functions are perfectly known.  To determine them for our baker' system one should prove the existence and compute the quantities (\ref{sh}), which are of geometric and dynamical nature.  This will be done in the next section in the context of a more probabilistic approach to Poisson-like statistics. Actually the quantities computed in the next section are not exactly those in (\ref{sh}),  but it is not difficult to modify their derivation to get (\ref{sh}) and therefore reprove Proposition \ref{limiting.distribution} with the spectral approach. As we said in the introduction and in section \ref{43}, we will present the alternative  probabilistic approach since it will allow us to cover a wider class of target sets and also to get the example \ref{IIEE} which shows a CP distribution different from the standard Poisson and the  P\`olya-Aeppli.

\subsection{The probabilistic approach}\label{eds} We now use a recent technique  developed in \cite{HHVV} and apply it to  our baker's map. We will recover the usual dichotomy and get a pure Poisson distribution when the points are not periodic, and a P\'olya-Aeppli distribution around periodic points with the parameter giving the geometric distribution of the size of the clusters which coincide with the extremal index computed in the preceding section. This last result is achieved in particular if we use the $l^{\infty}$ metric.  This result is not surprising; what is interesting is the great flexibility of the technique of the proof which allows us to get easily the expected properties. In order to apply the theory in \cite{HHVV}, we need to verify a certain number of assumptions, but otherwise defer to the aforementioned  paper for precise definitions. Here we recall the most important requirements and prove in detail one of them.  \\
{\em Warning:} the next considerations are carried  over with the Euclidean metric which is more natural for applications. As for the  visits to periodic points we will use the $l^{\infty}$ metric and the following computations are even easier.\\

{\em Decay of correlation.} There exists a decay function $\mathcal{C}(k)$ so that
$$
\left|\int_MG(H\circ T^k)\,d\mu-\mu(G)\mu(H)\right|
\le \mathcal{C}(k)\|G\|_{Lip}\|H\|_\infty\qquad\forall k\in\mathbb{N},
$$
for functions $H$ which are constant on local stable leaves $W_s$ of $T$ and
the functions $G:M\to\mathbb{R}$ being Lipschitz continuous. This is ensured by Theorem 2.5 in \cite{DDHH}, where the role of $H$ is taken by the test functions in $C^{\kappa}(W, \mathbb{C})$ and $G\in \mathcal{B},$ which is the completion of Lipschitz functions  on $X.$ The decay is exponential. \\

{\em Cylinder sets.} The proof requires the existence, for each $n\ge 1$, of a partition of each unstable leaf in subsets $\xi^{(k)}_n$, called $n$-cylinders (or cylinders of rank $n$), and indexed with $k$,  where $T^n$ is defined and the image $T^n\xi^{(k)}_n$ is an unstable leaf of full length   for each $k$. These cylinders are obtained by taking the $2^n$ preimages of $\Gamma=\{y=\alpha\} $ by the map $T_Y$ restricted to each leaf. In the following, we will take $\alpha=1/2$ to simplify the exposition. \\

{\em Exact dimensionality of the SRB measure.} This uses the existence of the limit (\ref{ED}).
 We shall need the following result.

\begin{lemma}\label{annulus.condition}
({\em Annulus type  condition}) Let $w>1$. If $x$ is a point for which the dimension limit~\eqref{ED}
exists for a positive $d$, then there exists a $\delta>0$ so that
$$
\frac{\mu(B(x,r+r^w )\setminus B(x,r))}{\mu(B(x, r))}=
O(r^\delta),
$$
for all $r>0$ small enough.
\end{lemma}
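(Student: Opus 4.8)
The plan is to prove the estimate by exploiting the concrete structure of the SRB measure of the baker's map, rather than relying on exact dimensionality alone (which does not, in general, bound annuli this thin). For $\mu$-a.e.\ $x$ the hypothesis \eqref{ED} holds with $d=1+d_s$, and for such $x$ I would argue as follows. Because $T$ is a linear skew product, its SRB measure is a product $\mu=\nu\otimes\ell$ (a standard fact for this map), where $\ell$ is Lebesgue measure along the one-dimensional expanding (unstable) direction and $\nu$ is the self-similar probability measure of the contracting iterated function system $\{u\mapsto\gamma u,\ u\mapsto 1-\gamma+\gamma u\}$ with equal weights $\tfrac12$; this is precisely why $d=1+d_s$. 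In our symmetric case this system satisfies the open set condition and is \emph{balanced}, $\gamma^{d_s}=\tfrac12$, so $\nu$ is Ahlfors $d_s$-regular: there are constants $0<c_0\le C_0$ with $\nu([a-\varrho,a+\varrho])\ge c_0\varrho^{d_s}$ for $a\in\operatorname{supp}\nu$ and $\nu(I)\le C_0|I|^{d_s}$ for an arbitrary interval $I$. Fix $x=(a,b)$ with $a\in\operatorname{supp}\nu$ and $b\in(0,1)$, and restrict to $r$ small enough that $[b-2r,b+2r]\subset(0,1)$.

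For the $l^\infty$ metric the ball of radius $\varrho$ is the square $Q_\varrho=[a-\varrho,a+\varrho]\times[b-\varrho,b+\varrho]$, so $\mu(Q_\varrho)=2\varrho\,\nu(J_\varrho)$ with $J_\varrho:=[a-\varrho,a+\varrho]$, and with $\varrho=r+r^w$ one gets
\[
\mu(Q_{\varrho})-\mu(Q_{r})=2r\bigl(\nu(J_{\varrho})-\nu(J_{r})\bigr)+2r^{w}\nu(J_{\varrho}).
\]
Here $\nu(J_\varrho)\le 2^{d_s}C_0\,r^{d_s}$ while $\mu(Q_r)=2r\,\nu(J_r)\ge 2c_0 r^{1+d_s}$, so the second term contributes $O(r^{w-1})$ to the ratio; and $J_\varrho\setminus J_r$ is a union of two intervals of length $r^w$, so $\nu(J_\varrho)-\nu(J_r)\le 2C_0 r^{wd_s}$ and the first term contributes $O(r^{(w-1)d_s})$. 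Hence the ratio is $O(r^{(w-1)d_s})$, and one may take $\delta=(w-1)d_s$. This case uses nothing but Ahlfors regularity of $\nu$.

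For the Euclidean metric, writing $\nu_a$ for $\nu$ recentered at $a$, one has $\mu(B(x,\varrho))=2\int_{|t|<\varrho}\sqrt{\varrho^2-t^2}\,d\nu_a(t)$. With $\varrho=r+r^w$ (so $\varrho^2-r^2\le 3r^{1+w}$) and splitting at $|t|=r$,
\begin{multline*}
\mu(B(x,\varrho))-\mu(B(x,r))=2\int_{|t|<r}\bigl(\sqrt{\varrho^2-t^2}-\sqrt{r^2-t^2}\bigr)\,d\nu_a(t)\\
{}+2\int_{r\le|t|<\varrho}\sqrt{\varrho^2-t^2}\,d\nu_a(t).
\end{multline*}
On $\{r\le|t|<\varrho\}$ the integrand is $\le\sqrt{\varrho^2-r^2}\lesssim r^{(1+w)/2}$ and the local $\nu_a$-mass is $\le\nu(J_\varrho)\lesssim r^{d_s}$, so this part of the ratio is $O(r^{(w-1)/2})$ after dividing by $\mu(B(x,r))\gtrsim r^{1+d_s}$; and since $\sqrt{\varrho^2-t^2}-\sqrt{r^2-t^2}\le(\varrho^2-r^2)/\sqrt{r^2-t^2}$, the remaining integral is $\lesssim r^{1+w}\int_{|t|<r}(r^2-t^2)^{-1/2}\,d\nu_a(t)$. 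Thus, in the Euclidean case, the whole lemma reduces to the one-dimensional estimate $\int_{|t|<r}(r^2-t^2)^{-1/2}\,d\nu_a(t)=O(r^{d_s-1})$.

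I would attack this last estimate by a dyadic decomposition of $\{|t|<r\}$ into the core $\{|t|<r/2\}$ and the shells $\{r-r2^{-j}\le|t|<r-r2^{-j-1}\}$, $j\ge1$: on the $j$-th shell $\sqrt{r^2-t^2}\gtrsim r\,2^{-j/2}$, while its $\nu_a$-mass is $\lesssim(r2^{-j})^{d_s}$ by Ahlfors regularity, so the shell contributes $\lesssim r^{d_s-1}2^{j(1/2-d_s)}$ and the series converges exactly when $2^{1/2-d_s}<1$, i.e.\ $d_s>\tfrac12$ (surviving a harmless logarithmic loss at $d_s=\tfrac12$); one then obtains $\delta=(w-1)/2$. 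This is where the argument strains: for $d_s<\tfrac12$, i.e.\ $\gamma<\tfrac14$, the naive bound diverges and one needs genuinely finer control of $\nu$ near the two points $a\pm r$, which is the real obstacle — and, I suspect, the reason the point-process computations of the paper are carried out in the $l^\infty$ metric, where no such issue arises. Modulo this single estimate, everything else is routine manipulation of the two-sided bounds on $\mu$ supplied by the product structure and Ahlfors regularity of $\nu$.
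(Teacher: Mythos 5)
Your $\ell^\infty$ case is fine, and your structural input (the product form $\mu=\nu\otimes\mathrm{Leb}$ along unstable fibers, with $\nu$ the equal-weight self-similar measure, Ahlfors $d_s$-regular) is correct and is a genuinely different route from the paper: the paper never uses the product structure, but disintegrates along unstable leaves, conditions on rank-$n$ cylinders, pushes them forward by $T^n$ (zero distortion), and uses only exact dimensionality to bound the transverse contribution by $\mu(B(x,3r))$, getting the exponent $\frac{w-1}{2}$ (up to $\varepsilon$) in the Euclidean case and a covering argument giving $(d-1)(w-1)$ in the $\ell^\infty$ case.

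The Euclidean branch of your argument, however, has a genuine gap, and the reduction you propose cannot be repaired as stated. Reducing to $\int_{|t|<r}(r^2-t^2)^{-1/2}\,d\nu_a(t)=O(r^{d_s-1})$ is a dead end when $d_s\le\frac12$: near $|t|=r$ the integrand is of order $\bigl(r(r-|t|)\bigr)^{-1/2}$, and if $a\pm r$ lies in $\operatorname{supp}\nu$ with mass of order $s^{d_s}$ at scale $s$ (which lower regularity permits), then $\int u^{-1/2}\,d\nu$ diverges for $d_s<\frac12$; so the estimate you need is simply false in general, not merely hard. The missing idea — exactly the geometric observation the paper exploits — is that you should not pass through the lossy bound $(\varrho^2-r^2)/\sqrt{r^2-t^2}$ at all: the vertical extent of the annulus on the fiber at offset $t$ is \emph{uniformly} $O(r^{(w+1)/2})$, since for $|t|\le r$ one has
$$
\sqrt{\varrho^2-t^2}-\sqrt{r^2-t^2}=\frac{\varrho^2-r^2}{\sqrt{\varrho^2-t^2}+\sqrt{r^2-t^2}}\le\sqrt{\varrho^2-r^2}\le\sqrt{3}\,r^{\frac{w+1}{2}},
$$
and for $r<|t|<\varrho$ the chord is also $\le\sqrt{\varrho^2-r^2}$. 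Hence $\mu(\mathcal{A})\le C r^{\frac{w+1}{2}}\,\nu_a([-\varrho,\varrho])\le C' r^{\frac{w+1}{2}+d_s}$, and dividing by your own lower bound $\mu(B(x,r))\ge c\,r^{1+d_s}$ gives the ratio $O(r^{\frac{w-1}{2}})$ for \emph{every} $d_s>0$, with no dichotomy at $d_s=\frac12$. Your closing speculation is also off the mark: the paper proves the Euclidean annulus condition for all $d>1$, and the $\ell^\infty$ metric enters at periodic points only to evaluate $\mu(T^{-p}B_n\cap B_n)/\mu(B_n)$ in the extremal-index/cluster computation, not because the annulus estimate fails for Euclidean balls.
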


Now we can apply the results of  Section 7.4 in~\cite{HHVV} to  prove the following result which tracks the number of visits a trajectory of the point $x \in X$ makes to the set $U$
on a suitable normalized orbit   segment:

\begin{proposition}\label{limiting.distribution}
 Consider  the counting function

\begin{equation*}
N^{\tau}_{B_n}(z)=\sum_{i=0}^{\floor{\tau/\mu(B_n)}} {{\bf 1}_{B_n}} \circ T^i(x),
\end{equation*}
 where $\tau$ is a positive
parameter and
$z$ is a point for which the limit (\ref{ED}) exists and $n\mu(B(z, e^{-u_n}))\rightarrow \tau$.
\begin{itemize}
\item If $z$ is not a periodic point and using the Euclidean metric, then we get a pure Poisson distribution:
$$\mu(N^{\tau}_{B_n}=k)\rightarrow \frac{e^{-\tau}\tau^k}{k!}, \ n\rightarrow \infty.$$
\item If $z$ is  a periodic point of minimal period $p$ and using the $l^{\infty}$ metric, we get a compound Poisson distribution
(P\'olya-Aeppli):
    $$
    \mu(N^{\tau}_{B_n}=k)\rightarrow  e^{-\Theta \tau}
\sum_{j=1}^k(1-\Theta)^{k-j}\Theta^{2j}\frac{\tau^j}{j!}\binom{k-1}{j-1}, \ n\rightarrow \infty,
    $$
\end{itemize}
where $\Theta$ is given  by $\Theta=1-\lim_{n\rightarrow \infty}\frac{\mu(T^{-p}B_n\cap B_n)}{\mu(B_n)}.$
\end{proposition}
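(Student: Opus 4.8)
The plan is to verify the hypotheses (H1)--(H5) (or whatever they are labelled) of Section~7.4 of \cite{HHVV} for the baker's map, since the conclusion of Proposition~\ref{limiting.distribution} is exactly the output of that machinery once those hypotheses hold. Concretely, the ingredients to check are: exponential decay of correlations against the appropriate test-function classes (already recalled in the text via Theorem~2.5 of \cite{DDHH}, with $H$ constant on stable leaves and $G$ Lipschitz); the existence of the cylinder structure on unstable leaves obtained from the $2^n$ preimages of $\alpha=1/2$ under $T_Y$ (also recalled); exact dimensionality of $\mu$ with the stated value $d=1+d_s$ (quoted from \cite{EO}); the annulus condition of Lemma~\ref{annulus.condition}; and the computation of the cluster-size (or ``return'') parameter, which is where the dichotomy between periodic and non-periodic $z$ enters. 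I would organize the proof as: (i) record that the abstract hypotheses are met by the above list; (ii) compute the relevant short-return quantity $\lim_n \mu(T^{-p}B_n\cap B_n)/\mu(B_n)$; (iii) feed the value into the compound Poisson formula of \cite{HHVV}.

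For step~(ii), the non-periodic case is immediate: as already argued for the extremal index in Section~4.2, if $z$ is not periodic then for $n$ large the iterates $T^j B_n$, $j=1,\dots,k$, are disjoint from $B_n$ (the balls are small enough to avoid the finitely many singularity lines $T^{-j}\Gamma$ and the finitely many points $T^j z$), so every short-return probability vanishes, the limiting cluster size is $1$, $\theta=1$, and the compound Poisson law collapses to the pure Poisson law $e^{-t}t^k/k!$. For periodic $z$ of minimal period $p$, I would repeat the geometric computation already sketched before Proposition~4.4 (for the $\ell^\infty$ metric): $B_n$ is a square of side $r_n$, $T^{-p}B_n$ is a concentric rectangle of dimensions $\gamma_a^{-p} r_n \times \alpha^p r_n$ placed symmetrically, and disintegrating $\mu$ along the unstable foliation $\mathcal W^u$ — whose conditionals are normalized Lebesgue measures $l_\nu$ along vertical leaves — gives, because the relevant lengths are now constant in $\nu$,
$$
\lim_{n\to\infty}\frac{\mu(T^{-p}B_n\cap B_n)}{\mu(B_n)}=\alpha^p,
$$
hence $\theta=1-\alpha^p$, matching Proposition~4.4. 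One then has to check that the iterated short returns also behave geometrically, i.e.\ that the cluster-size distribution is geometric with ratio $1-\theta$: because $T^{-p}$ acts as a fixed linear contraction by $\alpha^p$ along the stable (here: the relevant short) direction, the nested intersections $B_n\cap T^{-p}B_n\cap\dots\cap T^{-jp}B_n$ scale exactly by $\alpha^{jp}$ in measure, which is precisely the geometric/P\'olya--Aeppli input that produces the displayed formula with parameters $\theta$ and $s$.

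The main obstacle is step~(i) together with verifying that Lemma~\ref{annulus.condition} — whose proof I would give by splitting the annulus $B(x,r+r^w)\setminus B(x,r)$ into finitely many pieces and using exact dimensionality to get $\mu(B(x,r+r^w))\le r^{w(d-\iota)}\cdot(\text{stuff})$ versus $\mu(B(x,r))\ge r^{d+\iota}$, so that the ratio is $O(r^{\delta})$ with $\delta=(w-1)d-$(small) $>0$ for $w>1$ — is exactly the form demanded by the hypotheses of \cite{HHVV}. A subtler point, already flagged in the Euclidean case, is that for periodic $z$ the Euclidean ball does \emph{not} give constant leaf-lengths under the disintegration, so the geometric scaling of nested returns fails to have a clean closed form; this is why the periodic statement is made only for the $\ell^\infty$ metric, where $B_n$ and its preimages are rectangles aligned with the stable/unstable foliations and every length appearing in the disintegration is literally constant in $\nu$. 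I would therefore carry out step~(ii) entirely in the $\ell^\infty$ metric for the periodic case and remark that the non-periodic case is metric-independent since there the cluster size is trivially $1$.
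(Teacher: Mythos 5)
Your overall architecture coincides with the paper's: verify the hypotheses of Section~7.4 of \cite{HHVV} (exponential decay of correlations from Theorem~2.5 of \cite{DDHH}, the cylinder structure on unstable leaves, exact dimensionality of $\mu$, the annulus condition), compute the short-return limit $\lim_n\mu(T^{-p}B_n\cap B_n)/\mu(B_n)$ --- zero for non-periodic centers, $\alpha^p$ for periodic centers in the $\ell^\infty$ metric --- and feed $\theta$ into the compound Poisson machinery; your remark that the nested returns must scale geometrically (equivalently $\hat\alpha_{k+1}=(1-\theta)^k$), and that this is why the Euclidean metric is avoided at periodic points, is exactly the point the paper makes via Theorem~2 of \cite{HHVV}.

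The gap is in the one step the paper actually proves in detail, Lemma~\ref{annulus.condition}. You treat it as an almost direct consequence of exact dimensionality (``splitting the annulus into finitely many pieces'', a bound $\mu(B(x,r+r^w))\le r^{w(d-\iota)}\cdot(\text{stuff})$, and exponent $\delta=(w-1)d-\text{small}$), but exact dimensionality alone does not control the measure of a thin annulus: the naive difference $(r+r^w)^{d-\iota}-r^{d+\iota}$ is of the same order as $\mu(B(x,r))$ itself, so no power saving comes out, and the number of pieces of size $r^w$ needed to cover the annulus is not bounded but of order $r^{1-w}\to\infty$. The paper's proof needs genuine extra input: in the $\ell^\infty$ case a covering by $N\le 8r/r^w$ balls of radius $2r^w$, giving $\mu(\mathcal A)/\mu(B(x,r))=O\bigl(r^{(d-1)(w-1)-\varepsilon(w+1)}\bigr)$, where positivity of the exponent uses $d>1$ (not merely $d>0$, as your exponent $(w-1)d$ would suggest); in the Euclidean case, needed for the non-periodic statement, a disintegration along the unstable foliation, the rank-$n$ cylinder structure with zero distortion, and the chord estimate that a vertical leaf meets the annulus in a set of length $O(r^{(w+1)/2})$, leading to $O(r^{(w-1)/4})$. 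Without one of these arguments (and without addressing that exact dimensionality is invoked at the covering centers $y_j$, not only at $x$), your verification of the key hypothesis of \cite{HHVV} is incomplete, so the proposal as written does not yet prove the proposition.
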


\begin{proof}[Proof of Lemma~\ref{annulus.condition}.]
We have to prove the lemma in the two cases when (I)~the norm is $\ell^2$ and  
(II)~the norm is $\ell^\infty$ and the ball is geometrically a square.

\noindent (I) We now use the Euclidean metric and denote with $\mathcal{A}$
the annulus $\mathcal{A}=B(x,r+r^w )\setminus B(x, r)$ where $w>1.$ By disintegrating
 the SRB measure along the unstable manifolds we have:
$$
\mu(\mathcal{A})=\int l_{\nu}(\mathcal{A}\cap W_{\nu})\,
d\zeta(\nu).
$$
We  now split the subsets on each unstable manifold on the cylinders of rank
 $n$ and condition with respect to the Lebesgue measure on them\footnote{We use simply here $\xi_n$ instead of $\xi_n^{(k)}$ since the computation over $k$ is replaced by the sum.}:
\begin{equation}\label{FE}
l_{\nu}(\mathcal{A}\cap W_{\nu})=
\sum_{\xi_n; \xi_n\cap \mathcal{A}
\neq \emptyset}\frac{l_{\nu}(\mathcal{A}
\cap W_{\nu}\cap \xi_n)}{l_{\nu}(\xi_n)}l_{\nu}(\xi_n).
\end{equation}

 We then iterate forward each cylinder with $T^n;$
 they will become of full length equal to $1$ and subsequently we get $l_\nu'(T^n\xi_n)=1$  for some $W_{\nu'}.$
 Since the action  of $T$ is locally linear and expanding  by a factor $2^n$
(with the given choice of $\alpha=\frac12$) on the unstable leaves
and therefore has zero distortion, we have
 $$
 \frac{l_{\nu}(\mathcal{A}\cap W_{\nu}\cap \xi_n)}{l_{\nu}(\xi_n)}
=\frac{l_{\nu'}(T^n(\mathcal{A}\cap W_{\nu}\cap \xi_n))}{l_{\nu'}(T^n\xi_n)}
=l_{\nu'}(T^n(\mathcal{A})\cap W_{\nu'}),
 $$
 so that $T^n(\mathcal{A}\cap W_{\nu}\cap \xi_n)\subset W_{\nu'}$.
Therefore,
$$
l_{\nu}(\mathcal{A}\cap W_{\nu})
=\sum_{\xi_n; \xi_n\cap \mathcal{A}\neq \emptyset}l_{\nu'}
(T^n(\mathcal{A}\cap W_{\nu}\cap \xi_n))l_{\nu}(\xi_n).
$$

By elementary geometry we see that the largest intersection of $\mathcal{A}$ with the unstable
 leaves will produce a piece of length $O(r^{\frac{w+1}{2}});$ therefore $l_{\nu'}(T^n(\mathcal{A}\cap W_{\nu}\cap \xi_n))=O(2^n r^{\frac{w+1}{2}}),$
  and:
$$
\mu(\mathcal{A})=O(2^n r^{\frac{w+1}{2}})\int \sum_{\xi_n; \xi_n\cap \mathcal{A}
 \neq \emptyset} l_{\nu}(\xi_n)\,d\zeta(\nu).
$$
 We now observe that in order to have our result, it will be enough to get
 it with a decreasing sequence $r_n$, $n\rightarrow \infty$, of exponential type,
  $r_n=b^{-t(n)}, b>1, $ and $t(n)$ increasing to infinity. We put $r_n=2^{-n}.$ With this choice and remembering that $2^{-n}$ is also the length of the $n$-cylinders, we have
$$
\bigcup_{\xi_n; \xi_n\cap \mathcal{A} \neq \emptyset}\xi_n\subset B(x, r_n+r_n^w+2^{-n}) =B(x,2r_n+r_n^w)\subset B(x,3r_n),
$$
which, as the cylinders $\xi_n$ are disjoint, yields the estimate for the integral above:
$$
\mu(\mathcal{A})
=O( 2^n r_n^{\frac{w+1}{2}} r_n^{d-\epsilon}).
$$

 Now by the exact dimensionality of the SRB measure one has for any $\varepsilon>0$ and by renaming $r_n$ as $r:$
$$
(2r+r^w)^{d+\varepsilon}\le \mu(B(x, 2r+r^w))\le (2r+r^w)^{d-\varepsilon}
$$
for all $r$ small enough i.e.\ $n$ large enough.
With this we can divide $\mu(\mathcal{A})$ by the measure of the ball of radius $r$
and obtain the estimate
$$
\frac{\mu(\mathcal{A})}{\mu(B(x,r))}=O(r^{\frac{w-1}{2}+d-\varepsilon-d-\varepsilon})=
O(r^{\frac{w-1}{2}-2\varepsilon})=O(r^{\frac{w-1}{4}}),
$$
since $w>1,$ and provided $\varepsilon$ is small enough.

\noindent (II) Now we shall use the $\ell^\infty$-distance and again denote by $\mathcal{A}$
the annulus $B(x,r+r^w)\setminus B(x,r)$. Since we are in two dimensions, we can cover
the annulus by balls $B(y_j,2r^w)$ of radii $2r^w$, with centers $y_j$ for $j=1,\dots, N$.
The number $N$ of balls needed is bounded by $8\frac{r}{r^w}$.
For any $\varepsilon>0$ there exists a constant $c_1$ so that $\mu(B(y_j,2r^w))\le c_1r^{w(d-\varepsilon)}$
for all $r$ small enough. Thus
$$
\mu(\mathcal{A})
\le 8c_1r^{1+w(d-1-\varepsilon)}
$$
and since $\mu(B(x,r))\ge c_3r^{d+\varepsilon}$ for some $c_3>0$ we obtain
$$
\frac{\mu(\mathcal{A})}{\mu(B(x,r))}
\le c_4r^{(d-1)(w-1)-\varepsilon(w+1)}.
$$
The exponent $\delta=(d-1)(w-1)-\varepsilon(w+1)$ is positive as $d,w>1$ and $\varepsilon>0$
can be chosen sufficiently small.
\end{proof}

\begin{proof}[Proof of Proposition~\ref{limiting.distribution}] We can now prove the proposition by
applying Theorem~1 from~\cite{HHVV} and verify its Assumptions (I) to (VI) as follows:\\
(I) Let $\mathcal{I}_n$ be the collection of inverse branches $\varphi$ of the $n$-th iterate $T^n$
if the map $T$. Then, evidently, if $\varphi,\varphi'\in \mathcal{I}_n$ are two distinct inverse 
branches, their intersection $\varphi(X)\cap \varphi'(X)$ has zero measure. Therefore
the number of overlaps of `$n$-cylinders' $\varphi(X)$ is bounded by $L=1$.\\
(II) This condition is easily satisfied since  decay of correlations is exponential as 
the transfer operator is quasi compact.\\
(III) The set $\mathcal{G}_n$ of uniform expansion covers the entire space $X$ as
there is no `bad' set $\mathcal{G}_n^c$ of non-uniformly contracting inverse branches.
Consequently we get exponential contraction of $\max_{\varphi\in\mathcal{I}_n}\mbox{diam} \varphi(X)$
of the $n$-cylinder sets $\varphi(X)$. Moreover distortion 
$\sup_{\varphi\in\mathcal{I}}\sup_{x,y\in\varphi(X)}\frac{J_n(x)}{J_n(y)}$ 
is uniformly bounded, where $J_n$ is the Jacobian of $T^n$ restricted to the unstable
direction.\\
(IV) The dimension of the invariant measure is equal to $d=1+d_s$,
where $d_s<1$ is given above. So we can choose  $d_0>0$  and $d_1<\infty$ so that
$d_0<d<d_1$. \\
(V)  The dimension of the restricted measure on the unstable leaves equals
$u_0=1$ as it is Lebesgue.\\
(VI) This condition was verified in Lemma~\ref{annulus.condition}.

This shows that the condition of Theorem~1 of~\cite{HHVV} is satisfied.

If $x$ is an aperiodic point then  $\min\{j\ge1: B_\rho(x)\cap T^jB_\rho(x)\not=\varnothing\}$
goes to infinity as $\rho=e^{-u_n}\to0$. Thus for the coefficients
$$
\lambda_\ell(L)=\lim_{\rho\to0}\frac{\mathbb{P}(Z^L=\ell)}{\mathbb{P}(Z^L\ge1)}
$$
we obtain that for every $L$: $\lambda_1=1$ and $\lambda_\ell=0$ for all $\ell=2,3,\dots$,
where $Z^L=\sum_{j=1}^L{\bf 1}_{B_\rho(x)}\circ T^j$ is the hit counter on the finite orbit segment of length $L$.
This implies that $N_{B_n}^\tau$ converges in distribution to a standard Poisson
random variable with parameter $\tau$.

Let $x$ be a periodic point with minimal period $p$ and let $\tilde{B}_\rho$ be a square of
size $\rho$ centered at $x$ and whose sides are aligned with the stable and unstable directions
respectively. Then for $\ell=2,3,\dots$
$$
\hat\alpha_\ell
=\lim_{L\to\infty}\lim_{\rho\to0}\mathbb{P}(\tilde{Z}^L\ge\ell|\tilde{B}_\rho)
=\lim_{\rho\to0}\frac{\mu(\tilde{B}_\rho\cap T^{-(\ell-1)p}\tilde{B}_\rho)}{\mu(\tilde{B}_\rho)}
=\left(\lim_{\rho\to0}\frac{\mu(\tilde{B}_\rho\cap T^{-p}\tilde{B}_\rho)}{\mu(\tilde{B}_\rho)}\right)^{\ell-1}
$$
which implies that $\hat\alpha_\ell=\hat\alpha_2^{\ell-1}$, where $\tilde{Z}^L=\sum_{j=1}^L{\bf 1}_{\tilde{B}_\rho(x)}\circ T^j$.
Then for  $\alpha_\ell=\hat\alpha_\ell-\hat\alpha_{\ell+1}$
we thus obtain by~\cite{HHVV}  that
$\lambda_\ell=\frac{\alpha_\ell-\alpha_{\ell+1}}{\alpha_1}=(1-\theta)\Theta^{\ell-1}$,
where $1-\Theta=\alpha_1=1-\hat\alpha_2$ is the extremal index.
Hence $N_{B_n}^\tau$ converges in distribution to a P\'olya-Aeppli distributed random variable.
\end{proof}

It is worth mentioning that the previous result gives also an alternative way to prove EVT for the baker's map which is recovered when $k=0,$ as the limiting distribution of $\mu( N^{\tau}_{B_n}=0).$ Let us state it explicitly
\begin{proposition}\label{ty2}
Let $T$ be the baker's transformation and consider the function $M_n (x):=\max\{\phi(x), \dots, \phi(T^{n-1}x)\},$ where $\phi(x)=-\log d_{\infty}(x,z),$ and $z$ is chosen $\mu$-almost everywhere with respect to the SRB measure $\mu.$  Then, if $z$ is a  periodic point of minimal period $p$ verifying (\ref{ED}), we have
$$
\lim_{n\rightarrow \infty}\mu(M_n\le u_n)=e^{-\theta \tau},
$$
where $n\mu(B(z, e^{-u_n}))\rightarrow \tau$   and
$$
\theta=1-\alpha^p.
$$
\end{proposition}

\begin{remark}
Propositions \ref{ty1} and \ref{ty2} show that for a typical (non-periodic) point $z$ the limiting distribution of the maximum is purely exponential. The baker's map is probably the easiest example of a singular attractor. It is annoying that we could not compute analytically  the extremal index with respect to the Euclidean metric, which is the metric  usually accessible in simulations and physical observations. With references to Figures 2 and 3 respectively, the area of an extremely thin green ellipse within the blue circle is asymptotically equivalent to the area of an extremely thin green rectangle within the blue square, so, taking into account the ratio between the areas of the blue circle and the blue square, the limit as $p\rightarrow \infty$ of the extremal index for the Euclidean holes can be calculated.\footnote{We thank the anonymous referee for this observation.}
\end{remark}

\begin{example}\label{IIEE}
    \noindent The second statement of Proposition~\ref{limiting.distribution} about
periodic points requires the neighborhoods $B_n$ to be chosen in a dynamically
 relevant way. Here they turn out to be squares (or rectangles). If the measure
 has some mixing properties with respect to a partition then the sets $B_n$ can
 be taken to be cylinder sets as it was done in~\cite{HV09} for periodic points
 and in~\cite{HP14} Corollary~1 for non-periodic points. Here we show that for Euclidean balls
  one cannot in general expect the limiting distribution at periodic
  points to be P\'olya-Aeppli and therefore cannot be described by the single
  value of the extremal index.
 
  We assume that all parameters are equal, that
  is $\gamma_a=\gamma_b=\alpha=\beta=\frac12$. This is the {\em fat}
  baker's map for which the Lebesgue measure on $[0,1]^2$ is the SRB measure $\mu$.
  Let $x$ be a periodic point with minimal period $p$.
  Then $\mu(B(x,r))=r^2\pi$ and
  $$
  \mu\!\left(\bigcap_{i=0}^kT^{-ip}B(x,r)\right)=4r^22^{-kp}(1+\mathcal{O}(2^{-2kp})).
  $$
  This yields  
  $$
  \hat\alpha_{k+1}
  =\lim_{r\to0}\frac{\mu\!\left(\bigcap_{i=0}^kT^{-ip}B(x,r)\right)}{\mu(B(x,r))}
  =\frac4\pi \arctan 2^{-kp}
  =\frac4\pi2^{-kp}(1+\mathcal{O}(2^{-2kp}))
  $$
  for $k=1,2,\dots$. According to~\cite{HHVV} Theorem~2 we then define the values
   $\alpha_k=\hat\alpha_k-\hat\alpha_{k+1}$ where the value $\alpha_1$
   is the extremal index, i.e.\ $\theta=\alpha_1$. If the limiting distribution is
   P\'olya-Aeppli  then the probabilities $\lambda_k=\frac{\alpha_k-\alpha_{k+1}}{\alpha_1}$,
   $k=1,2,\dots$, are geometrically distributed and must satisfy $\lambda_k=\theta(1-\theta)^{k-1}$
   which is equivalent to saying that $\hat\alpha_{k+1}=(1-\theta)^k$ for $k=0,1,2,\dots$
   (see~\cite{HHVV} Theorem~2).
   Evidently this condition is violated in the present
    case and we conclude that
   the limiting distribution given by the values $\hat\alpha_k$ is not P\'olya-Aeppli
   and in fact obeys another compound Poisson distribution.
\end{example}
\subsection{Compound  point processes}
The compound Poisson distribution could be enriched by defining the {\em rare event point process} (REPP). Let us first introduce a few  objects. Put $I_l=[a_l, b_l), l=1,\dots,k, a_l, b_l\in \mathbb{R}_0^{+}$ a finite  number of disjoint semi-open intervals of the non-negative real axis; call $J=\cup_{l=1}^k I_l$ their  union. If $r$ is a positive real number, we write $rJ=\cup_{l=1}^k rI_l=\cup_{l=1}^k[ra_l, rb_l).$ We denote with $|I_l|$ the length of the interval $I_l,$ which coincides  with its Lebesgue measure $\text{Leb}(I_l).$ The REPP counts the number of visits to the set $B_n$ during the rescaled time period $v_n J:$
\begin{equation}\label{pp}
N_n(\cdot)(J)=\sum_{l\in v_nJ\cap\mathbb{N}_0}{\bf 1}_{B_n}(T^l\cdot),
\end{equation}
where $v_n$ is taken as
$$
v_n=\left\lfloor{\frac{\tau}{\mu(B_n)}}\right\rfloor, \ \tau>0.
$$
Our REPP belongs to the class of the point processes on $\mathbb{R}^+_0,$  see \cite{Kal} for all the properties of point processes used below. They are given by any measurable map $N:(X, \mathcal{F}_X, \mu)\rightarrow \mathcal{N}_p([0, \infty)),$  where $(X, \mathcal{F}_X, \mu)$ is the probability space of our original dynamical system with  the invariant measure $\mu$ and the Borel $\sigma$-algebra $\mathcal{F}_X,$ and $\mathcal{N}_p([0, \infty))$ denotes the set of counting measures $\textfrak{c}$ on $\mathbb{R}^+_0$ endowed with the  $\sigma$-algebra $\mathcal{M}_p(\mathbb{R}^+_0),$  which is the smallest $\sigma$-algebra making all evaluation maps $\textfrak{c}\rightarrow \textfrak{c}(B),$ from $\mathcal{N}_p([0, \infty))\rightarrow [0, \infty]$ measurable for all  $B\in \mathbb{R}_0^+.$ Any counting measure $\textfrak{c}$ has the form $\textfrak{c}=\sum_{i=1}^{\infty}\delta_{x_i}, \ x_i\in [0, \infty).$
The distribution  of $N$, denoted $\mu_N,$ is the measure $(\mu\circ N^{-1})=(\mu[N\in \cdot]),$ on $ \mathcal{M}_p(\mathbb{R}^+_0).$
The set  $\mathcal{N}_p([0, \infty))$ becomes a topological space with the vague topology, i.e. the sequence $\textfrak{c}_n$ converges to $\textfrak{c}$  whenever $\textfrak{c}_n(\phi)\rightarrow \textfrak{c}(\phi)$ for any continuous function $\phi:\mathbb{R}^+_0\rightarrow \mathbb{R}$ with compact support. We also say that the sequence of point processes $N_n$ converges in distribution to the point process $N,$ eventually defined on another probability space $(X', \mathcal{F}'_{X'}, \mu'),$ if $\mu_{N_n}$ converges weakly to $\mu'_N,$ that is for every continuous function $\varphi$ defined on $\mathcal{N}_p([0, \infty))$ we have $\lim_{n\rightarrow \infty}\int \varphi d\mu\circ N_n^{-1}=\int \varphi d\mu'\circ N^{-1}.$ In this case we will write $N_n\xrightarrow {\mu} N.$\\

If we now return to our REPP (\ref{pp}), we will see that a very common result is to get  
$
N_n\xrightarrow {\mu} \tilde{N},$
where
\begin{equation}\label{i3}
\mu(x, \tilde{N}(x)(I_l)=k_l, 1\le l\le n)=\prod_{l=1}^n e^{-\tau\text{Leb}(I_l)}\frac{\tau^{k_l}\text{Leb}(I_l)^{k_l}}{k_l!},
\end{equation}
for any disjoint bounded sets $I_1, \dots, I_n$ and non-negative integers $k_1,\dots, k_n,$ which is called the {\em standard Poisson point process}.  In general our REPP processes converges in distribution to a {\em compound point process} (CPP).
We say that  the point process  $N:(X', \mathcal{F}'_{X'}, \mu')\rightarrow \mathcal{N}_p([0, \infty))$ is a CPP with intensity parameter $t$ and cluster size distribution $(\lambda_l)_{l\ge 1}$ if  it satisfies:
\begin{itemize}
 \item For any finite sequence of measurable sets $B_1, \dots, B_k$ in $\mathcal{F}'_{X'}$ and mutually disjoint, the random variables $N(\cdot)(B_i), i=1,\dots,k,$ are independent.
 \item For any measurable set $B\in \mathcal{F}'_{X'},$ the random variable $N(\cdot)(B)$ is a CP random variable with intensity $t\text{Leb}(B), t>0$ and cluster size distribution $(\rho_l)_{l\ge 1},$ see the definition in section \ref{PPP}.
\end{itemize}

From now on we will simply write $N(\cdot)$ instead of $N(x)(\cdot)$ and we consider it as a CPP. In order to study the convergence of our REPP $N_n$ to the CPP $N$ two equivalent criteria are available. Before stating them we should  remind the definition of the
Laplace transform for a general point process $R:(X', \mathcal{F}'_{M'}, \mu')\rightarrow \mathcal{N}_p([0, \infty)):$
\begin{equation}\label{LT}
\psi_R(y_1, \dots, y_k)=\mathbb{E}_{\mu'}\left(e^{-\sum_{l=1}^{k}y_lR(I_l)}\right),
\end{equation}
for every non negative values $y_1, \dots, y_{k},$ each choice of $k$ disjoint intervals $I_i=[a_i, b_i), i=1,\dots,k.$ In the case of a  CPP $N$  with intensity parameter $t$ and cluster size distribution $(\rho_l)_{l\ge 1},$ we get
\begin{equation}\label{cs1}
\psi_{N}(y_1, \dots, y_{k})=e^{-t\sum_{l=1}^{k}(1-\varphi(y_l))\text{Leb}(I_l)},
\end{equation}
where $\varphi(y)=\sum_{i=0}^{\infty}e^{-yi}\rho_i$ is the  Laplace transform of the cluster size distribution $(\rho_l)_{l\ge 1}.$\\
 Therefore in order to establish the convergence in distribution of the REPP $N_n$ toward the CPP $N$ it will be sufficient to show \cite{Kal}: \\
- (C1):  that for any $k$ disjoint intervals $I_i=[a_i, b_i), i=1,\dots,k$ the joint distribution of $N_n$ converges to the joint distribution of $N,$ namely
$$
\left(N_n(I_1),\dots N_n(I_k)\right)\rightarrow \left(N(I_1),\dots N(I_k)\right).
$$
-(C2):  the convergence of the Laplace transforms:
$$
\psi_{N_n}(y_1, \dots, y_{\zeta})=\mathbb{E}\left(e^{-\sum_{l=1}^{k}y_lN_n(I_l)}\right)\rightarrow \psi_{N}(y_1, \dots, y_{k})=e^{-t\sum_{l=1}^{k}(1-\varphi(y_l))\text{Leb}(I_l)},
$$
as $n\rightarrow \infty.$\\
The criterion (C1) lends itself to being studied with  the probabilistic  approach of \cite{HHVV} as two of us recently  showed in  (\cite{amorim}, Theorem 3),  see also \cite{freitastodd} for a different method. The criterion (C2) is {\em naturally}  adapted to the spectral approach (just replacing characteristic functions with Laplace transforms), and the complete  treatment, involving two of us, will appear soon \cite{nuovo}. Both criteria allow to extend immediately Proposition \ref{limiting.distribution} to the point process framework giving
\begin{proposition}
 Consider  the counting measure
\begin{equation*}
N_n(\cdot)(J)=\sum_{l\in v_nJ\cap\mathbb{N}_0}{\bf 1}_{B_n}(T^l\cdot)
\end{equation*}
 where $\tau$ is a positive
parameter, $v_n=\left\lfloor{\frac{\tau}{\mu(B_n)}}\right\rfloor$,  and
$z$ is a point for which the limit (\ref{ED}) exists and $n\mu(B(z, e^{-u_n}))\rightarrow \tau$.
\begin{itemize}
\item If $z$ is not a periodic point and using the Euclidean metric, then $N_n$ converges in distribution to a standard Poisson point process of intensity $\tau,$ see (\ref{i3}) for the finite size distributions.
\item If $z$ is  a periodic point of minimal period $p$ and using the $l^{\infty}$ metric, we get a compound point process of
P\'olya-Aeppli type, namely a CPP with intensity $\tau \theta$ and cluster size distribution $\theta(1-\theta)^l, l\ge1,$
where $\theta$ is given as above by $\theta=1-\lim_{n\rightarrow \infty}\frac{\mu(T^{-p}B_n\cap B_n)}{\mu(B_n)}.$
\end{itemize}
\end{proposition}
\section{Generalization to other observable and connection with hitting time}\label{dse}
One could possibly wonder if the observable (\ref{OO}), $\Xi(x)=-\log d(x,z),$ plays an essential role in the theory. The answer is more nuanced. Let us consider a measurable function $\Phi:X\rightarrow \mathbb{R}\cup \pm \{\infty\}$ and construct the new process $\Phi\circ T^j, j\ge 0.$ We are interested in the extreme value distribution (EVD):
$$
\mathcal{W}_n=\mu(\mathcal{M}_n\le \textfrak{u}_n),
$$
where
$$
\mathcal{M}_n(x):=\max_{0\le j\le n-1}\{\Phi(T^jx)\}.
$$
We will return in a moment on the choice for sequence $\textfrak{u}_n.$ Let us introduce the set
$$
\textfrak{B}_n:=\{\Phi\ge  \textfrak{u}_n\},
$$
which we continue to call a {\em ball.} For instance, another commonly used observable is
$$
\Phi(x)=x^{-\frac{1}{\alpha}}, \ \alpha>0;
$$
in this case $\textfrak{B}_n$ is simply a closed euclidean ball  around zero of radius $\textfrak{u}_n^{-\alpha};$ see the Appendix A for a brief account of EVD for different types of observables.

Define now the quantity for any $x\in X$:
$$
\textfrak{t}_{\textfrak{B}_n}(x):=\inf\{j\ge 1; \Phi(T^jx)\in \textfrak{B}_n\},
$$
which gives the first {\em hitting time} to the ball $\textfrak{B}_n$ when we start from the point $x.$ By the invariance of the SRB measure $\mu$ it is easy to show that
\begin{equation}\label{retour}
\mu(\textfrak{t}_{\textfrak{B}_n}>n)=\mathcal{W}_n=\mu(\mathcal{M}_n\le \textfrak{u}_n),
\end{equation}
which establishes an important link between the law of extremes and the distribution of the  hitting times (this also enlightens again why the EVD is recovered when $k=0$ in the Poisson distribution, see Proposition \ref{ty2}).\\

Given the general observable $\Phi,$ the spectral analysis of section 3 proceeds formally as we did in the previous chapters, but in order to get the final results we need to check the following points:\\
(i) ${\bf 1}_{\textfrak{B}_n}\in \mathcal{B},$ and we saw that the geometrical shape of $\textfrak{B}_n$  matters.\\
(ii) Condition {\bf A2} must be checked taking into account again the geometrical nature of $\textfrak{B}_n.$\\
(iii) In dealing with condition {\bf A4} we  have now to compare
$\sup_{||h||\le 1}|Z(\mcl(h {\bf 1}_{\textfrak{B}_n}))|,$ see eq. (\ref{ETA}), with $
\Delta_n=\mu(\textfrak{B}_n);
$
this and the associated Lemma \ref{bcv} use again the local structure of  the set $\textfrak{B}_n.$\\
(iv) Finally we have to prove the convergence of the $q_k,$ (\ref{QQ}), to get the extremal index. This last condition requires the important scaling
$$
n\mu(\textfrak{B}_n)\rightarrow \tau,
$$
for some positive $\tau,$ which fixes as well the choice of $\textfrak{u}_n.$

\appendix
\section{Observables and corresponding extreme value laws}
The main classical result of extreme value theory is  given in the next theorem due to Gnedenko \cite{GN}, Fisher and Tippett \cite{FT}. The theorem deals with a sequence of i.i.d. random variables and we denote again with $M_n$ the maximum over the first $n$ variables. 
\begin{thm}
    If $X_0, X_1,\dots$ is a sequence of i.i.d. random variables and there exists linear normalizing sequences $(a_n)_{n\in \mathbb{N}}$ and $(b_n)_{n\in \mathbb{N}},$ with $a_n>0$ for all $n,$ such that
    $$
    \mathbb{P}(a_n(M_n-b_n)\le y)\rightarrow G(y),
    $$
    where the convergence occurs at continuity points of $G,$ and $G$ is nondegenerate, then $G(y)=e^{-\tau(y)},$ where $\tau(y)$ is one of the following three types (for some $\beta, \gamma>0$):\\
    (1) $\tau_1(y)=e^{-y}, y\in \mathbb{R};$\\
    (2) $\tau_2(y)=y^{-\beta}, y>0;$\\
    (3) $\tau_3(y)=(y)^{\gamma}, y>0.$
    \end{thm}
    We now give conditions on the choice of the observable  to get  sufficient and necessary conditions in order to get a nondegenerate EVD  still in the i.i.d. setting. 
    For the reader's convenience, we quote section 4.2.1 of the book \cite{book}, for the choice of the function $\Phi$ introduced in section \ref{dse}. It has the form, for $x\in X:$
$$
\Phi(x)=g(\text{dist}(x,\zeta)),
$$
where $\zeta\in X$ is a chosen point and the function $g:[0, +\infty)\rightarrow \mathbb{R}\cup\{+\infty\}$ is such that $0$ is a global maximum ($g(0)$ may be $\infty$); $g$ is a strictly decreasing bijection $g:V\rightarrow W$ in a neighborhood $V$ of $0$, and has one of the following three types of behavior:
\begin{itemize}
\item Type $g_1:$ there exists some strictly positive function $h:W\rightarrow \mathbb{R}$ such that for all $y\in \mathbb{R}:$
$$
\lim_{s\rightarrow g_1(0)}\frac{g_1^{-1}(s+yh(s))}{g_1^{-1}(s)}=e^{-y}.
$$
\item Type $g_2:$ $g_2(0)=+\infty$ and there exists $\beta>0$ such that for all $y>0:$
$$
\lim_{s\rightarrow +\infty}\frac{g_2^{-1}(sy)}{g_2^{-1}(s)}=y^{-\beta}.
$$
\item Type $g_3:$ $g_3(0)=D<+\infty$ and there exists $\gamma>0$ such that for all $y>0:
$
$$
\lim_{s\rightarrow +\infty}\frac{g_3^{-1}(D-sy)}{g_3^{-1}(D-s)}=y^{\gamma}.
$$
    \end{itemize}
Examples of each one of the three types are as follows:
\begin{enumerate}
 \item $g_1(x)=-\log x;$ in this case $h=1.$  
 \item $g_2(x)=x^{-1/\alpha}$ for some $\alpha>0;$ in this case $\beta=\alpha.$
 \item $g_3(x)=D-x^{1/\alpha}$ for some $D\in \mathbb{R}$ and $\alpha>0;$ in this case $\gamma=\alpha.$
\end{enumerate}
Type 1 gives the {\em Gumbel} law, type 2 gives the {\em Fréchet} law and finally type 3 furnishes the {\em Weibull} law.\\

    A great amount of work has been done to extend such a result from the i.i.d. setting  first to stationary and then to non stationary  processes. Whenever the latter arise in the dynamical systems setting, we defer to the book \cite{book} for an exhaustive presentation of the results: the spectral approach used in this article is one of them. As a final remark, we notice that by expressing our  scaling sequence $u_n$ as $u_n=b_n+\frac{y}{a_n},$  we will recover one of the previous three distributions as a function of the parameter $y.$
\section{Acknowledgments}
S.V. thanks the Laboratoire International Associ\'e LIA LYSM, the
INdAM (Italy), the UMI-CNRS 3483, Laboratoire Fibonacci (Pisa) where this work has
been completed under a CNRS delegation and the {\em Centro de Giorgi} in Pisa for various
supports. SV thanks M. Demers, D.~Dragi\v{c}evi\'{c}  C. Gonzalez-Tokman  and Y. Nakano for enlightening discussions.

N.H. was supported by a Simons Foundation Collaboration Grant (ID 526571).

J.A. was supported by an ARC Discovery Project and thanks the Centro de Giorgi in Pisa and CIRM in Luminy for
their support and hospitality.\\We finally thank the anonymous referees whose very careful and accurate reading of the paper, helped us to considerably improve it by adding several new proofs and explications.


\end{document}